\documentclass[
  11pt,
  reqno
]{amsart}

\usepackage{mathtools}
\usepackage{amssymb}
\usepackage{amsthm}
\usepackage{mathrsfs}
\usepackage{tikz-cd}
\usepackage[no-math]{fontspec}
\usepackage{microtype}
\usepackage{enumitem}

\usepackage[
  backend=biber,
  style=numeric,
  doi=true,
  sorting=nyt,
  eprint=true,
  giveninits=true,
  maxnames=99
]{biblatex}
\DeclareFieldFormat[article]{title}{\mkbibemph{#1}}
\renewbibmacro*{in:}{%
  \ifentrytype{article}
    {}
    {\printtext{\bibstring{in}\intitlepunct}}}

\usepackage[hidelinks]{hyperref}
\usepackage[nameinlink,noabbrev]{cleveref}

\newcommand{\Z}{\mathbb{Z}}
\newcommand{\R}{\mathbb{R}}
\newcommand{\cF}{\mathcal{F}}
\newcommand{\Lip}{\operatorname{Lip}}
\newcommand{\dist}{\operatorname{dist}}
\newcommand{\pathLength}{\operatorname{len}}

\DeclarePairedDelimiter{\abs}{\lvert}{\rvert}
\DeclarePairedDelimiter{\norm}{\lVert}{\rVert}
\DeclarePairedDelimiter{\set}{\lbrace}{\rbrace}

\newcommand{\closedBall}[3]{B_{#1}(#2,#3)}
\newcommand{\ellOne}{\ell_1}
\newcommand{\ellOneOf}[1]{\ell_1(#1)}
\newcommand{\finitelySupported}[1]{c_{00}(#1)}
\newcommand{\oneSum}[2]{#1\oplus_1 #2}
\newcommand{\inftySum}[2]{#1\oplus_\infty #2}
\newcommand{\freeSpace}[1]{\cF(#1)}
\newcommand{\dirac}[1]{\delta_{#1}}
\newcommand{\freeEmbedding}[1]{\delta_{#1}}
\newcommand{\pointedLip}[1]{\Lip_0(#1)}
\newcommand{\boundedContinuous}[1]{C_b(#1)}
\newcommand{\dual}[1]{#1^*}
\newcommand{\closedLinearSpan}[1]{\overline{\operatorname{span}#1}}

\newcommand{\tripleSource}[1]{M_{#1}}
\newcommand{\tripleTarget}[1]{N_{#1}}
\newcommand{\tripleMap}[1]{V_{#1}}
\newcommand{\protectedSourceEmbedding}[2]{\iota_{#1}^{#2}}
\newcommand{\protectedTargetEmbedding}[2]{\jmath_{#1}^{#2}}
\newcommand{\protectedExtensionRetraction}[2]{R_{#1}^{#2}}
\newcommand{\extendedMap}{V^{+}}
\newcommand{\attachmentSet}{A}
\newcommand{\attachmentMap}{\varphi}
\newcommand{\attachmentInclusion}{\iota_A}
\newcommand{\attachmentGap}{C}
\newcommand{\attachmentHeight}{H}
\newcommand{\sourceEmbedding}{\iota_E}
\newcommand{\targetEmbedding}{\iota_F}
\newcommand{\adjunctionSpace}{P}
\newcommand{\adjunctionInclusion}{q}
\newcommand{\adjunctionTargetEmbedding}{\iota_N}
\newcommand{\adjunctionMetric}{d_P}
\newcommand{\relativeEnvelope}{B_0}
\newcommand{\gluingSubspace}{Z}
\newcommand{\relativeQuotientMap}{\Pi}
\newcommand{\relativeDualFunctional}[2]{\Phi_{#1,#2}}
\newcommand{\targetFunctional}{\ell}
\newcommand{\oldTargetEmbedding}{j_0}
\newcommand{\relativeEmbedding}{\eta}
\newcommand{\quotientSpace}{Q}
\newcommand{\metricQuotientMap}{\pi_Q}
\newcommand{\quotientMetric}{d_Q}
\newcommand{\quotientBasepoint}{o}
\newcommand{\distanceFunction}[1]{d_{#1}}
\newcommand{\kuratowskiEmbedding}{K}
\newcommand{\kuratowskiSpan}{G_Q}
\newcommand{\injectivityCoordinate}{\widetilde{K}}
\newcommand{\combinedEmbedding}{J}
\newcommand{\cutoffMap}{\lambda}
\newcommand{\sourceRetraction}{\rho_E}
\newcommand{\adjunctionRetraction}{\rho_P}
\newcommand{\linearizedAdjunctionRetraction}{\widehat{\rho}_P}
\newcommand{\quotientRetractionLift}{T}
\newcommand{\relativeRetraction}{R_0}
\newcommand{\firstCoordinateProjection}{\pi_{B_0}}
\newcommand{\protectedRetraction}{R}
\newcommand{\minimizingPath}[1]{\mathcal P_{#1}}
\newcommand{\baseComparisonPath}[2]{\mathcal D_{#1,#2}}
\newcommand{\mixedAttachmentComparisonPath}[1]{\mathcal L_{#1}}
\newcommand{\oppositeMixedAttachmentComparisonPath}[1]{\widetilde{\mathcal L}_{#1}}
\newcommand{\mixedPath}{\mathcal R}
\newcommand{\baseRoute}[3]{\mathcal B_{#1,#2,#3}}
\newcommand{\topRoute}[3]{\mathcal T_{#1,#2,#3}}

\newcommand{\continuumCardinal}{\mathfrak c}
\newcommand{\stageCardinal}{\theta}
\newcommand{\recursionCardinal}{\kappa}
\newcommand{\sourceStage}[1]{M_{#1}}
\newcommand{\targetStage}[1]{N_{#1}}
\newcommand{\stageMap}[1]{V_{#1}}
\newcommand{\sourceStageEmbedding}[2]{\iota_{#1}^{#2}}
\newcommand{\targetStageEmbedding}[2]{\jmath_{#1}^{#2}}
\newcommand{\stageRetraction}[2]{R_{#1}^{#2}}
\newcommand{\extensionStages}[1]{I(#1)}
\newcommand{\stageProjection}[1]{P_{#1}}
\newcommand{\coordinateTruncation}[1]{\tau_{#1}}
\newcommand{\stageTailTruncation}[2]{\tau_{#1,#2}}
\newcommand{\stageEnumeration}[1]{e_{#1}}
\newcommand{\scheduleMap}{\sigma}

\newcommand{\bentMap}{S}
\newcommand{\bentTarget}{\ellOne^2}
\theoremstyle{plain}
\newtheorem{theorem}{Theorem}[section]
\newtheorem{lemma}[theorem]{Lemma}
\newtheorem{proposition}[theorem]{Proposition}

\theoremstyle{definition}

\newtheorem{problem}[theorem]{Problem}

\theoremstyle{remark}

\newenvironment{acknowledgements}{%
  \medskip\noindent\textit{Acknowledgements.}\ }{\par}
\newenvironment{useofai}{%
  \medskip\noindent\textbf{Use of AI.}\ }{\par}

\crefname{theorem}{theorem}{theorems}
\crefname{lemma}{lemma}{lemmas}
\crefname{proposition}{proposition}{propositions}
\crefname{corollary}{corollary}{corollaries}
\crefname{definition}{definition}{definitions}
\crefname{example}{example}{examples}
\crefname{problem}{problem}{problems}
\crefname{question}{question}{questions}
\crefname{remark}{remark}{remarks}
\Crefname{theorem}{Theorem}{Theorems}
\Crefname{lemma}{Lemma}{Lemmas}
\Crefname{problem}{Problem}{Problems}

\title{A counterexample to Scottish Book\\Problem 155}
\author{Yoshito Ishiki}
\address{Department of Mathematical Sciences\\
Tokyo Metropolitan University\\
Minami-osawa, Hachioji, Tokyo 192-0397, Japan}
\email{ishiki-yoshito@tmu.ac.jp}
\date{}

\subjclass[2020]{Primary 46B20, Secondary 54E40}
\keywords{Banach space, local isometry, Lipschitz-free space, Scottish Book}
\hypersetup{
  pdftitle={A counterexample to Scottish Book Problem 155},
  pdfauthor={Yoshito Ishiki},
  pdfsubject={Scottish Book Problem 155},
  pdfkeywords={Banach space, local isometry, Lipschitz-free space, Scottish Book}
}

\begin{document}

\begin{abstract}
We construct real Banach spaces $X$ and $Y$ and a bijection $U\colon X\to Y$ which preserves all pairwise distances on every closed ball of radius $1/4$ but is not a global isometry. The proof starts from an explicit injective map that preserves every distance at most $1/2$ and shortens one longer distance. We then enlarge the source and target through a transfinite recursion that inserts every missing target point while preserving both injectivity and this distance threshold.
\end{abstract}

\maketitle

\section{Introduction}
Scottish Book Problem 155 asks whether local preservation of distances forces global preservation for a bijection between real Banach spaces
\cite[Problem~155]{mauldin2015}.
We use the closed ball convention from the problem.
Thus,
for a real Banach space
$X$,
we write
$\closedBall{X}{x}{r}=\set{z\in X\mid \norm{z-x}_{X}\le r}$.
The hypothesis says that for every
$x\in X$
there is
$r_x>0$
such that a bijection
$U\colon X\to Y$
preserves all pairwise distances on
$\closedBall{X}{x}{r_x}$.
The question is whether
$U$
must preserve every distance in
$X$.

Mori proved that the answer is positive when the source space is separable,
even when injectivity is replaced by surjectivity
\cite[Theorem~3]{mori2024}.
Basso obtained a positive answer under the stronger metric notion of a local isometry
\cite[Corollary~1.3]{basso2023}.
In that notion the local restriction must also map onto an open subset of the target.
In particular,
the result of Basso applies to the original hypothesis when the inverse bijection is continuous.
For broader historical context on Scottish Book Problem~155,
see
\cite[Problem~155]{domoradzki2026}.

The unrestricted statement fails.

\begin{theorem}\label{thm:counterexample}
There are real Banach spaces
$X$
and
$Y$
and a bijection
$U\colon X\to Y$
such that
\[
\norm{U(p)-U(q)}_{Y}=\norm{p-q}_{X}
\]
for every
$x\in X$
and every
$p,q\in \closedBall{X}{x}{1/4}$,
while
$U$
is not a global isometry.
\end{theorem}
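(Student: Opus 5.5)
The plan follows the strategy announced in the abstract. We work with triples $(\tripleSource{},\tripleTarget{},\tripleMap{})$: $\tripleSource{}$ and $\tripleTarget{}$ are real Banach spaces and $\tripleMap{}\colon \tripleSource{}\to\tripleTarget{}$ is an injective map satisfying
\[
\norm{\tripleMap{}(p)-\tripleMap{}(q)}=\norm{p-q}\quad\text{whenever }\norm{p-q}\le 1/2,
\]
and $\norm{\tripleMap{}(p)-\tripleMap{}(q)}\le\norm{p-q}$ in general. Since any two points of a ball of radius $1/4$ are at distance at most $1/2$, such a map preserves all pairwise distances on every ball $\closedBall{}{x}{1/4}$. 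It therefore suffices to produce a bijective triple that shortens at least one distance.

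\textbf{Seed triple.} We first need an explicit injective map that is isometric below $1/2$ and shortens one longer distance. A natural choice is a ``bent'' map $\bentMap\colon\R\to\bentTarget$ that wraps a line so that points far apart come slightly closer. A cleaner option is to take the source to be the free space $\freeSpace{M}$ over a suitable metric space $M$ and define the map by linearizing a $1$-Lipschitz map. Concretely, fix a metric space $M$ and a non-isometric injective map $f\colon M\to N$ into a Banach space $N$ that is isometric on pairs at distance $\le1/2$ and $1$-Lipschitz overall. Then set $\tripleMap{}=f$, defined only on a subset. The difficulty is that $\tripleMap{}$ must be defined on the whole Banach space $\tripleSource{}$.

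\textbf{Extension step.} To enlarge a triple, two operations are needed.
\begin{enumerate}[label=(\roman*)]
\item Given a missing target point $y\notin\tripleMap{}(\tripleSource{})$, adjoin a new source direction. Form $\tripleSource{}'=\freeSpace{\adjunctionSpace}$, or the free space over an adjunction space $\adjunctionSpace$ obtained by gluing a copy of the relevant target region along the attachment map $\attachmentMap$. Enlarge the target as well, and extend $\tripleMap{}$ so that some new source point hits $y$.
\item Since the source becomes a bigger linear space, its new points must also be mapped. Use Lipschitz-free linearization: a $1$-Lipschitz map into a Banach space extends to a norm-one linear map on the free space. Isometry at small scales is enforced by adding a Kuratowski-type coordinate $\kuratowskiEmbedding$ in an $\inftySum{}{}$ sum to the target. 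This also guarantees injectivity.
\end{enumerate}
Each step must keep the original triple embedded, via isometric linear embeddings $\protectedSourceEmbedding{}{}$ and $\protectedTargetEmbedding{}{}$ commuting with the maps. It must also keep the $1/2$-threshold isometry property, and ideally provide norm-one retractions $\protectedRetraction{}$ so that limits behave well.

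\textbf{Transfinite recursion.} Choose a cardinal $\recursionCardinal$ with $\recursionCardinal^{\theta}=\recursionCardinal$ for the relevant densities (for instance, use the continuum for the bookkeeping). Enumerate all target points that appear along the way with a scheduling map $\scheduleMap$. Alternate two kinds of steps: insert each missing target point, and close the source under the extension. At limit ordinals, take completions of the direct limits. Distance relations between any two points are decided at a stage where both already exist, so the threshold property and injectivity pass to the union.

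At the end, $\tripleMap{}$ is surjective by the bookkeeping and injective, so it is a bijection. It remains non-isometric because the seed pair's shortened distance is preserved under the isometric embeddings.

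\textbf{Main obstacle.} The main obstacle is step (i)–(ii): inserting a preimage of $y$ without destroying injectivity or small-scale isometry. A new source point $p$ with $\tripleMap{}(p)=y$ must satisfy $\norm{p-q}=\norm{y-\tripleMap{}(q)}$ whenever either side is at most $1/2$. It must also satisfy $\norm{p-q}\ge\norm{y-\tripleMap{}(q)}$ always. The same must hold for all linear combinations involving $p$.

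My first attempt would be to define the new source metric on $\tripleSource{}\cup\{p\}$ as the largest metric compatible with these constraints. This is a path metric: each path is measured by the target distance on steps of length at most $1/2$, and by the source distance within $\tripleSource{}$. Then take the free space over this enlarged space and verify, through a minimizing-path analysis, that no short distance collapses. The hardest part will be checking that gluing does not create shortcuts below threshold $1/2$.
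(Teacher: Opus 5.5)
Your outline has the same architecture as the paper: a bent seed, one-point extensions, bookkeeping, and completions at limit ordinals. The bent map $\bentMap$ you mention is exactly the paper's seed. However, the two steps that carry the proof are either wrong as stated or left open.

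\textbf{The extension step.} The map on the enlarged source cannot come from Lipschitz-free linearization. A linear map $T$ with $\norm{T(x)}=\norm{x}$ whenever $\norm{x}\le 1/2$ is a global isometry by homogeneity, but the stage maps must retain the seed contraction $\norm{\bentMap(-1)-\bentMap(2)}=1<3$.

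Making the new source $\freeSpace{\adjunctionSpace}$, or a free space over $M\cup\set{p}$ with a glued metric, also fails. It destroys the linear isometric inclusion of the old source, because $m\mapsto\dirac{m}$ is not linear. The stages would then not form a chain of Banach subspaces on which the maps extend one another.

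Your Kuratowski coordinate has the same defect on the target side. An isometric coordinate on the glued space is not linear on the old target $N$, so $N$ would not sit linearly inside the new target. To keep that inclusion linear, the extra coordinate must vanish on $N$. It can then supply injectivity, but not short-scale isometry.

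Your ``main obstacle'', controlling all linear combinations of $p$ with $M$, is stated but not resolved. The missing idea is to leave the source norm alone. Put $E=\oneSum{M}{\R}$ and let the preimage of $y$ be $(a,\attachmentHeight)$ with $\attachmentHeight>\attachmentGap+L+4r$. Then no old source point lies within distance $r$ of it, and the only compatibility condition is the 1-Lipschitz inequality $\norm{V(m)-y}\le\norm{m-a}+\attachmentHeight$.

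All gluing then happens on the target side:
\begin{enumerate}[label=(\roman*)]
\item Form $\adjunctionSpace=N\cup_{\attachmentMap}E$ along $\attachmentSet=(M\times\set{0})\cup\set{(a,\attachmentHeight)}$.
\item Check by a path analysis that distances at most $r$ in $E$ survive in $\adjunctionSpace$.
\item Map $\adjunctionSpace$ into $\inftySum{\relativeEnvelope}{\kuratowskiSpan}$. Here $\relativeEnvelope=(\oneSum{N}{\freeSpace{\adjunctionSpace}})/\closedLinearSpan{\set{(n,-\dirac{n})\mid n\in N}}$ contains $N$ linearly and isometrically and still preserves short distances; this needs McShane-extended dual functionals and the collar distance formula. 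The space $\kuratowskiSpan$ carries the Kuratowski embedding of $\adjunctionSpace/N$, which vanishes on $N$.
\end{enumerate}
The resulting stage map $\extendedMap$ is nonlinear, and the free space appears only in the target.

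\textbf{Limit stages.} Your claim that distance relations ``are decided at a stage where both already exist'' fails after completion, since points of the completed union need not lie in any earlier stage. Short-scale isometry passes to the completion by continuity, but injectivity does not. Long distances may shrink, so nothing rules out sequences $x_0^n\to x_0$ and $x_1^n\to x_1$ with $\norm{x_0^n-x_1^n}$ bounded below and $\norm{V(x_0^n)-V(x_1^n)}\to 0$.

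You do need the completions. The final union is complete only because each Cauchy sequence lands in a single earlier stage, and that stage must itself be complete.

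The paper closes this gap with 1-Lipschitz \emph{linear} retractions $\protectedRetraction\colon F\to N$ satisfying the recovery identity $\protectedRetraction(\extendedMap(m,s))=V(m)$ for $\abs{s}\le L$. These are built from a cut-off retraction $E\to N$, linearized through $\freeSpace{\adjunctionSpace}$ and passed to the quotient $\relativeEnvelope$. An $\ellOne$-family of new coordinates has only finitely many entries of absolute value greater than $L$. The recovery identity therefore gives $\stageRetraction{\gamma}{\lambda}\circ\stageMap{\lambda}=\stageMap{\gamma}\circ\stageProjection{\gamma}$ for all large $\gamma<\lambda$, and injectivity at the stages $\gamma$ then yields injectivity at $\lambda$. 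Your proposal mentions retractions only as desirable and does not state this recovery identity, which is what makes them work.
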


The construction begins with an explicit piecewise linear map
$\bentMap\colon\R\to\bentTarget$.
It is injective and preserves every distance at most
$1/2$
but contracts one distant pair.
The main difficulty is to make its image equal to an entire Banach space without losing injectivity or the fixed short distance scale.

The successor mechanism is
\Cref{lem:protected-extension}.
Starting from an injective map
$V\colon M\to N$
and a missing point
$y\in N\setminus V(M)$,
it enlarges the source and target Banach spaces to
$E=\oneSum{M}{\R}$
and
$F$,
and constructs a map
$\extendedMap\colon E\to F$.
The new map puts
$y$
into its image
and retains every distance up to the prescribed scale.
Its injectivity coordinate is obtained from the Kuratowski embedding of the metric quotient formed by collapsing the old target
$N$.
The lemma also constructs a 1-Lipschitz linear retraction
$\protectedRetraction\colon F\to N$
onto the old target.
The maps satisfy
\[
(\protectedRetraction\circ\extendedMap)(m,s)=V(m)
\]
whenever
$\abs{s}\le L$.
This coordinate recovery identity prevents distinct points from acquiring the same image when increasing unions are completed at limit ordinals.

After the successor and limit mechanisms are assembled,
a recursion of regular cardinal length schedules every target point for entry into the image.
The final unions are already complete,
and the resulting map is bijective.
The original contracted pair survives through linear isometric inclusions.

\medskip
\noindent\textbf{Organization.}
In
\Cref{sec:metric-linear-tools},
we collect the fixed short scale estimate and the facts about Lipschitz-free spaces used in the sequel.
In
\Cref{sec:protected-extension},
we establish the protected one point extension which forms the successor mechanism.
In
\Cref{sec:coherent-limit-stages},
we show how coherent 1-Lipschitz retractions with this coordinate recovery property preserve injectivity after completion at limit ordinals.
In
\Cref{sec:counterexample},
we construct the bent map and carry out the regular cardinal recursion to prove
\Cref{thm:counterexample}
and formulate
\Cref{prob:sufficient-conditions}.

\medskip
\noindent\textbf{Conventions and notation.}
All Banach spaces in this paper are real.
An embedding between Banach spaces is linear and isometric.

\begin{useofai}
An earlier version of this work was publicly posted as an
\emph{AI-Assisted Research Report}
\cite{ishiki2026aireport155}.
The present paper is a revised version of that report.
OpenAI Codex was used in the preparation of this manuscript for language editing,
\LaTeX{}
typesetting assistance,
literature searches,
and exploration of proof constructions.
In particular,
Codex suggested the counterexample construction and assisted in developing the protected one point extension and the coherent transfinite recursion.
The author verified the arguments and takes full responsibility for the mathematical content and the final version of the manuscript.
The author also understands all proofs and constructions presented in this paper.
\end{useofai}

\begin{acknowledgements}
The author was supported by JSPS KAKENHI Grant Number JP24KJ0182.
\end{acknowledgements}

\section{Metric and linear tools}\label{sec:metric-linear-tools}
We first pass from preservation at one fixed short scale to a global 1-Lipschitz estimate.
We then recall the Lipschitz-free construction which turns the metric maps used later into linear maps between Banach spaces.

\subsection{Maps which preserve a fixed short scale}

A map
$T\colon X\to Y$
between metric spaces is 1-Lipschitz if
\[
d_Y(T(x_0),T(x_1))\le d_X(x_0,x_1)
\]
for all
$x_0,x_1\in X$.
For a linear map between Banach spaces,
this is equivalent to having operator norm at most one.

Let
$M$
and
$N$
be Banach spaces,
let
$r>0$,
and let
$V\colon M\to N$
satisfy
\begin{equation}\label{eq:short-scale}
\norm{V(m_0)-V(m_1)}_{N}=\norm{m_0-m_1}_{M}
\quad
\text{whenever }
\norm{m_0-m_1}_{M}\le r.
\end{equation}
Fix
$m_0,m_1\in M$.
Put
$d=\norm{m_0-m_1}_{M}$,
choose a positive integer
$n$
such that
$d/n\le r$,
and set
\[
m_k=m_0+\frac{k}{n}(m_1-m_0)
\]
for
$0\le k\le n$.
Each consecutive pair has distance
$d/n$,
so
\eqref{eq:short-scale}
applies to every pair
$m_{k-1},m_k$.
The triangle inequality in
$N$
therefore yields
\begin{equation}\label{eq:global-one-lipschitz}
\begin{aligned}
\norm{V(m_0)-V(m_1)}_{N}
&\le \sum_{k=1}^{n}\norm{V(m_k)-V(m_{k-1})}_{N}\\
&=\sum_{k=1}^{n}\norm{m_k-m_{k-1}}_{M}\\
&=\norm{m_0-m_1}_{M}
\end{aligned}
\end{equation}
for all
$m_0,m_1\in M$.
Thus every map satisfying
\eqref{eq:short-scale}
is globally 1-Lipschitz.

The successor construction uses both the sum norm and the maximum norm.
We write
$\oneSum{M}{E}$
and
$\inftySum{M}{E}$
for direct sums with the sum norm and the maximum norm,
respectively.

We shall use the following norming form of the Hahn--Banach theorem
\cite[Theorem~4.4.2]{geiss-geiss2025}.
For every real Banach space
$X$
and every
$x\in X$,
there is
$\phi\in\dual{X}$
such that
\[
\norm{\phi}_{\dual{X}}\le 1
\qquad\text{and}\qquad
\phi(x)=\norm{x}_{X}.
\]
A functional with these properties is called a norming functional for
$x$.

\subsection{Lipschitz-free spaces}

We use Lipschitz-free spaces to build a Banach space from the glued metric space constructed in
\Cref{sec:protected-extension}
and to turn a 1-Lipschitz retraction into a linear map.
Let
$(P,d,0)$
be a pointed metric space.
Let
$\pointedLip{P}$
be the Banach space of real Lipschitz functions
$f\colon P\to\R$
which satisfy
$f(0)=0$,
with norm
$\norm{f}_{\pointedLip{P}}=\Lip(f)$.
For
$p\in P$,
let
$\dirac{p}\in\dual{\pointedLip{P}}$
be the evaluation functional defined by
\[
\dirac{p}(f)=f(p).
\]
The Lipschitz-free Banach space over
$P$
is
\[
\freeSpace{P}
=\closedLinearSpan{\set{\dirac{p}\mid p\in P}}
\subset \dual{\pointedLip{P}}.
\]
Equivalently,
$\freeSpace{P}$
is the completion of the finitely supported signed measures on
$P$
with total mass zero under the Kantorovich--Rubinstein norm.
For
$\mu=\mu^+-\mu^-$,
this norm is the Wasserstein 1 distance between the equal mass measures
$\mu^+$
and
$\mu^-$.

Denote the canonical map by
\[
\freeEmbedding{P}\colon P\to \freeSpace{P},
\qquad
\freeEmbedding{P}(p)=\dirac{p},
\]
where
$\dirac{0}=0$
and
\begin{equation}\label{eq:free-isometry}
\norm{\dirac{p}-\dirac{q}}_{\freeSpace{P}}=d(p,q).
\end{equation}
The dual space
$\dual{\freeSpace{P}}$
is canonically identified with
$\pointedLip{P}$.
Every pointed 1-Lipschitz map
$T\colon P\to X$
to a Banach space has a unique 1-Lipschitz linear map
$\widehat{T}\colon\freeSpace{P}\to X$
such that
$\widehat{T}(\dirac{p})=T(p)$
for every
$p\in P$.
Equivalently,
$\widehat{T}\circ\freeEmbedding{P}=T$.
Arens and Eells construct the molecule space for an arbitrary metric space and prove that its canonical map is isometric
\cite[Section~2(C) and Section~3]{arens-eells1956}.
For a Banach space regarded as a pointed metric space,
Godefroy and Kalton record the canonical predual,
the canonical isometry,
and the linearization property
\cite[Definition~1.1,
Proposition~2.1,
and Lemma~2.5]{godefroy-kalton2003}.
Michael later obtained a short proof of the isometric embedding in
\eqref{eq:free-isometry}
using evaluation functionals and distance functions
\cite[Theorem]{michael1964}.

We also use the real McShane extension theorem.
A real Lipschitz function on a subset of a metric space extends to the whole space with the same Lipschitz constant
\cite[Theorem~1]{mcshane1934}.

\section{A protected one point extension}\label{sec:protected-extension}
We separate the successor mechanism into its metric,
linear,
and recovery components.
We first name the common interface used by the three components.
Fix
$r>0$.
A triple
\[
\mathcal A
=\bigl(
\tripleSource{\mathcal A},
\tripleTarget{\mathcal A},
\tripleMap{\mathcal A}
\bigr)
\]
consists of real Banach spaces
$\tripleSource{\mathcal A}$
and
$\tripleTarget{\mathcal A}$
and a map
\[
\tripleMap{\mathcal A}
\colon
\tripleSource{\mathcal A}
\to
\tripleTarget{\mathcal A}.
\]
We call
$\mathcal A$
a uniformly locally isometric triple at scale
$r$
if
$\tripleMap{\mathcal A}$
is injective and
\begin{equation}\label{eq:uniform-local-isometry-triple}
\norm{
\tripleMap{\mathcal A}(m_0)
-\tripleMap{\mathcal A}(m_1)
}_{\tripleTarget{\mathcal A}}
=
\norm{m_0-m_1}_{\tripleSource{\mathcal A}}
\end{equation}
whenever
\[
\norm{m_0-m_1}_{\tripleSource{\mathcal A}}
\le r.
\]

Let
$L>0$,
let
$\mathcal A$
be a uniformly locally isometric triple at scale
$r$,
and let
\[
y\in
\tripleTarget{\mathcal A}
\setminus
\tripleMap{\mathcal A}(\tripleSource{\mathcal A}).
\]
For another triple
\[
\mathcal B
=\bigl(
\tripleSource{\mathcal B},
\tripleTarget{\mathcal B},
\tripleMap{\mathcal B}
\bigr),
\]
consider maps
\[
\protectedSourceEmbedding{\mathcal A}{\mathcal B}
\colon
\tripleSource{\mathcal A}
\to
\tripleSource{\mathcal B},
\qquad
\protectedTargetEmbedding{\mathcal A}{\mathcal B}
\colon
\tripleTarget{\mathcal A}
\to
\tripleTarget{\mathcal B},
\]
and
\[
\protectedExtensionRetraction{\mathcal A}{\mathcal B}
\colon
\tripleTarget{\mathcal B}
\to
\tripleTarget{\mathcal A}.
\]
We say that
$\mathcal B$,
together with these maps,
is an
$L$-protected extension of
$\mathcal A$
at
$y$
if the first two maps are linear isometric embeddings,
the third map is linear and 1-Lipschitz,
and the following conditions hold.
\begin{enumerate}[label=\textup{(E\arabic*)},ref=\textup{(E\arabic*)}]
\item\label{item:extension-scale}
The triple
$\mathcal B$
is uniformly locally isometric at scale
$r$.
\item\label{item:extension-domain}
We have
\[
\tripleSource{\mathcal B}
=\oneSum{\tripleSource{\mathcal A}}{\R}
\]
and
\[
\protectedSourceEmbedding{\mathcal A}{\mathcal B}(m)
=(m,0)
\]
for
$m\in\tripleSource{\mathcal A}$.
\item\label{item:extension-commutation}
The extension square commutes,
which means that
\[
\tripleMap{\mathcal B}
\circ
\protectedSourceEmbedding{\mathcal A}{\mathcal B}
=
\protectedTargetEmbedding{\mathcal A}{\mathcal B}
\circ
\tripleMap{\mathcal A}.
\]
\item\label{item:extension-hit}
The embedded point
\[
\protectedTargetEmbedding{\mathcal A}{\mathcal B}(y)
\]
belongs to
\[
\tripleMap{\mathcal B}(\tripleSource{\mathcal B}).
\]
\item\label{item:extension-retraction}
The map
$\protectedExtensionRetraction{\mathcal A}{\mathcal B}$
is a retraction onto the old target in the sense that
\[
\protectedExtensionRetraction{\mathcal A}{\mathcal B}
\circ
\protectedTargetEmbedding{\mathcal A}{\mathcal B}
=
\operatorname{id}_{\tripleTarget{\mathcal A}}.
\]
\item\label{item:extension-recovery}
For
$m\in\tripleSource{\mathcal A}$
and
$\abs{s}\le L$,
we have
\[
\bigl(
\protectedExtensionRetraction{\mathcal A}{\mathcal B}
\circ
\tripleMap{\mathcal B}
\bigr)(m,s)
=
\tripleMap{\mathcal A}(m).
\]
\end{enumerate}

The next lemma constructs the metric adjunction and records the properties needed for its later linearization.

\begin{lemma}\label{lem:metric-adjunction}
Let
$M$
and
$N$
be real Banach spaces,
let
$r>0$,
and let
$V\colon M\to N$
be injective and satisfy
\eqref{eq:short-scale}.
Fix
$y\in N\setminus V(M)$
and
$a\in M$.
Put
$\attachmentGap=\norm{V(a)-y}_{N}$,
choose
$\attachmentHeight>\attachmentGap+4r$,
and set
\[
E=\oneSum{M}{\R},
\qquad
\sourceEmbedding\colon M\to E,
\qquad
\sourceEmbedding(m)=(m,0).
\]
Define
\[
\attachmentSet=(M\times\set{0})\cup\set{(a,\attachmentHeight)}
\]
and
$\attachmentMap\colon\attachmentSet\to N$
by
\[
\attachmentMap(m,0)=V(m),
\qquad
\attachmentMap(a,\attachmentHeight)=y.
\]
Let
$\adjunctionSpace=N\cup_{\attachmentMap}E$
be the quotient of the disjoint union of
$N$
and
$E$
obtained by identifying every
$u\in\attachmentSet$
with
$\attachmentMap(u)$.
Let
\[
\adjunctionInclusion\colon E\to\adjunctionSpace
\quad\text{and}\quad
\adjunctionTargetEmbedding\colon N\to\adjunctionSpace
\]
be the canonical maps.
For
$Z\in\set{E,N}$
and
$u,v\in Z$,
write
$[u,v]_Z$
for the oriented affine segment from
$u$
to
$v$.
We write
$\mathcal C_0*\mathcal C_1$
for concatenation of compatible paths and
$\overline{\mathcal C_0}$
for the reversed path.
An admissible polygonal path
$\mathcal C$
in
$\adjunctionSpace$
is a finite concatenation of images under
$\adjunctionInclusion$
or
$\adjunctionTargetEmbedding$
of affine line segments lying in
$E$
or
$N$.
If
$\mathcal C$
has
$k$
pieces with endpoints
$u_j,v_j\in Z_j$
for
$1\le j\le k$,
where
$Z_j\in\set{E,N}$,
define
\[
\pathLength(\mathcal C)
=\sum_{j=1}^{k}\norm{u_j-v_j}_{Z_j}.
\]
The path metric on
$\adjunctionSpace$
is
\[
\adjunctionMetric(z,w)
=\inf\set{
\pathLength(\mathcal C)
\mid
\mathcal C\text{ is an admissible polygonal path from }z\text{ to }w
}.
\]
Then
$\attachmentMap$
is injective and 1-Lipschitz,
$\adjunctionTargetEmbedding$
is an isometric embedding,
and
$\adjunctionInclusion$
is injective and 1-Lipschitz.
The maps satisfy
\[
\adjunctionInclusion\circ\sourceEmbedding
=\adjunctionTargetEmbedding\circ V
\quad\text{and}\quad
\adjunctionInclusion(a,\attachmentHeight)
=\adjunctionTargetEmbedding(y).
\]
Moreover,
if
$x_0,x_1\in E$
and
$\norm{x_0-x_1}_{E}\le r$,
then
\[
\adjunctionMetric(\adjunctionInclusion(x_0),\adjunctionInclusion(x_1))
=\norm{x_0-x_1}_{E}.
\]
\end{lemma}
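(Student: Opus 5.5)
We split the statement into the easy structural claims and the one metric estimate that needs real work.

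\emph{Structural claims.}
Injectivity of $\attachmentMap$ holds because $V$ is injective and $y\notin V(M)$. For the Lipschitz bound, the only nontrivial pairs are $(m,0)$ and $(a,\attachmentHeight)$. For these we use the global 1-Lipschitz estimate \eqref{eq:global-one-lipschitz}:
\[
\norm{V(m)-y}_N\le\norm{V(m)-V(a)}_N+\attachmentGap\le\norm{m-a}_M+\attachmentGap .
\]
The right side is less than $\norm{m-a}_M+\attachmentHeight=\norm{(m,0)-(a,\attachmentHeight)}_E$.

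The identities $\adjunctionInclusion\circ\sourceEmbedding=\adjunctionTargetEmbedding\circ V$ and $\adjunctionInclusion(a,\attachmentHeight)=\adjunctionTargetEmbedding(y)$ hold by definition of the quotient. Injectivity of $\adjunctionInclusion$ is also set-theoretic: two points of $E$ are identified only through $\attachmentMap$, and $\attachmentMap$ is injective. The map $\adjunctionInclusion$ is 1-Lipschitz because the single segment $[x_0,x_1]_E$ is an admissible path.

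\emph{Isometry of $\adjunctionTargetEmbedding$.}
We must show no admissible path between points of $N$ is shorter than their $N$-distance. The plan is to build a 1-Lipschitz map $\Psi\colon\adjunctionSpace\to N$, measured against path length, that is the identity on $N$. By McShane we could work with real functions instead, but a direct retraction is cleaner. We first define a 1-Lipschitz map $g\colon E\to N$ extending $\attachmentMap$; the natural candidate is to extend $V$ off $M\times\{0\}$ while moving toward $y$ near height $\attachmentHeight$. Since $N$-valued Lipschitz extension is unavailable in general, we instead argue with functionals. For $n,n'\in N$, take a norming functional $\phi$ for $n-n'$ (Hahn--Banach). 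Then $f=\phi\circ\attachmentMap$ is 1-Lipschitz on $\attachmentSet$, and McShane extends it to a 1-Lipschitz function $\tilde f$ on $E$. Gluing $\phi$ on $N$ with $\tilde f$ on $E$ gives a function $h$ on $\adjunctionSpace$ that is consistent on the identified points. Along any admissible path, the change in $h$ is at most the length of the path. Hence every such path has length at least $\phi(n-n')=\norm{n-n'}_N$, which gives the isometry.

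\emph{Local isometry of $\adjunctionInclusion$ (main obstacle).}
Fix $x_0,x_1\in E$ with $\norm{x_0-x_1}_E\le r$ and an admissible path $\mathcal C$ between their images, and suppose $\pathLength(\mathcal C)<\norm{x_0-x_1}_E\le r$. We would show this is impossible as follows.

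If $\mathcal C$ never leaves $\adjunctionInclusion(E)$ except through glued points where the $E$-representative is unchanged, it is a path in $E$ and the triangle inequality finishes. Otherwise $\mathcal C$ makes excursions into $N$. Each excursion enters and exits at glued points of $\attachmentSet$.

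We first rule out any use of the apex $(a,\attachmentHeight)$. Since $\pathLength(\mathcal C)<r$, every point of $\mathcal C$ lies in $E$-distance less than $r$ of $x_0$, measured along the $E$-pieces plus the $N$-jumps. Reaching the apex from the base $M\times\{0\}$ requires either height change $\attachmentHeight$ within $E$, or an $N$-distance $\norm{V(m)-y}_N$. The latter is at least $\norm{V(m)-V(a)}_N-\attachmentGap$, which could be small, so a cleaner split is needed. We split according to whether the endpoints are near the apex (height $>\attachmentHeight-2r$) or near the base, using $\attachmentHeight>\attachmentGap+4r$ so that the two regions are separated in $\adjunctionMetric$ by more than $2r$. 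Here the functional $(m,s)\mapsto$ height, glued appropriately, is 1-Lipschitz. Near the apex no excursion is possible within length $r$ except passing through the single point $y$, and a passage through a single point costs nothing, so the path reduces to one in $E$.

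Near the base, an excursion replaces an $E$-subpath between $(m,0)$ and $(m',0)$ by an $N$-path of length at least $\norm{V(m)-V(m')}_N$. Since $\norm{m-m'}_M<r$, \eqref{eq:short-scale} makes this equal to $\norm{m-m'}_M=\norm{(m,0)-(m',0)}_E$, so the excursion can be replaced by the $E$-segment without increasing length. Iterating over the finitely many excursions yields a path in $E$, contradicting the triangle inequality. The delicate bookkeeping is making the bound ``$<r$'' hold for each excursion and keeping the apex region genuinely separated. That is where the margin $4r$ in the choice of $\attachmentHeight$ is spent.
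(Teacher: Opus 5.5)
\textbf{Verdict: there is a genuine gap in the local isometry of $\adjunctionInclusion$.} Your claims about $\attachmentMap$, the two identities, and the 1-Lipschitz property of $\adjunctionInclusion$ are correct. Your proof that $\adjunctionTargetEmbedding$ is isometric is a valid alternative to the paper's path replacement: you glue a norming functional $\phi$ on $N$ with a McShane extension of $\phi\circ\attachmentMap$ on $E$, and this function changes by at most the length along any admissible path.

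Write $x_i=(m_i,s_i)$ and $d=\norm{x_0-x_1}_E$. Your key step replaces an $N$-excursion from $V(m)$ to $V(m')$ by $[(m,0),(m',0)]_E$ on the grounds that $\norm{m-m'}_M<r$, but nothing gives this bound. The hypothesis $\pathLength(\mathcal C)<d$ controls only $\adjunctionMetric$, hence at best $\norm{V(m)-V(m')}_N$. A map satisfying \eqref{eq:short-scale} may strictly contract pairs with $\norm{m-m'}_M>r$; the bent seed does. For a single excursion the two $E$-legs give only $\norm{m-m'}_M<2d$, and an intermediate excursion can land anywhere in $M$. For such excursions the $N$-jump can be strictly shorter than the $E$-segment, so your replacement can increase length.

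The apex step also rests on a false claim. The path $[y,V(a)]_N$ shows $\adjunctionMetric(\adjunctionInclusion(a,\attachmentHeight),\adjunctionInclusion(a,0))\le\attachmentGap$, which may be smaller than $2r$. So the apex and base regions are not separated in $\adjunctionMetric$. For the same reason the height function has no 1-Lipschitz gluing to $\adjunctionSpace$: it changes by $\attachmentHeight>\attachmentGap$ across this pair.

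The missing idea is to replace in the opposite direction and to apply the short scale only to the endpoints.
\begin{itemize}
\item Every internal $E$-block of an admissible path has endpoints in $\attachmentSet$. Replacing it by the $N$-segment between their $\attachmentMap$-images never increases length, because $\attachmentMap$ is 1-Lipschitz.
\item After merging consecutive $N$-segments, $\mathcal C$ becomes either $[x_0,x_1]_E$ or $[x_0,u]_E*[\attachmentMap(u),\attachmentMap(v)]_N*[v,x_1]_E$ with $u,v\in\attachmentSet$.
\item If $u=(u',0)$ and $v=(v',0)$, use \eqref{eq:short-scale} only for the pair $m_0,m_1$ and \eqref{eq:global-one-lipschitz} for the legs:
\[
\norm{m_0-m_1}_M=\norm{V(m_0)-V(m_1)}_N
\le\norm{m_0-u'}_M+\norm{V(u')-V(v')}_N+\norm{v'-m_1}_M .
\]
Then add $\abs{s_0}+\abs{s_1}\ge\abs{s_0-s_1}$.
\item If $u=v=(a,\attachmentHeight)$, the middle segment is degenerate and the triangle inequality in $E$ finishes.
\item In the mixed case, the two $E$-legs alone have total length at least $\norm{(u',0)-(a,\attachmentHeight)}_E-d\ge\attachmentHeight-d>d$. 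The height condition therefore acts through separation in $E$, not in $\adjunctionSpace$.
\end{itemize}

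Finally, your set-theoretic argument for injectivity of $\adjunctionInclusion$ does not show that distinct points of $\adjunctionInclusion(E)$ are at positive $\adjunctionMetric$-distance. This positivity is needed for $\adjunctionMetric$ to be a metric. The paper proves it from the reduced distance formula, the closedness of $\attachmentSet$, and the continuity and injectivity of $\attachmentMap$.
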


\begin{proof}
The set
$\attachmentSet$
is closed in
$E$.
The set
$\attachmentSet$
consists of the points of
$E$
that will be identified with their images under
$\attachmentMap$
in the construction below.
The map
$\attachmentMap$
is injective because
$V$
is injective and
$y\notin V(M)$.
Its restriction to
$M\times\set{0}$
is 1-Lipschitz by
\eqref{eq:global-one-lipschitz}.
For
$m\in M$,
the distance between
$(m,0)$
and the remaining point
$(a,\attachmentHeight)$
satisfies
\[
\begin{split}
\norm{V(m)-y}_{N}
&\le \norm{V(m)-V(a)}_{N}+\attachmentGap
\le \norm{m-a}_{M}+\attachmentGap\\
&<\norm{m-a}_{M}+\attachmentHeight
=\norm{(m,0)-(a,\attachmentHeight)}_{E}.
\end{split}
\]
Thus
$\attachmentMap$
is 1-Lipschitz on
$\attachmentSet$.

The definition of
$\adjunctionMetric$
makes
$\adjunctionInclusion$
1-Lipschitz.
Let
$\attachmentInclusion\colon\attachmentSet\to E$
be the inclusion.
The canonical maps satisfy
$\adjunctionInclusion\circ\attachmentInclusion
=\adjunctionTargetEmbedding\circ\attachmentMap$,
as displayed in the adjunction square
\begin{equation}\label{diag:metric-adjunction}
\begin{tikzcd}[column sep=large,row sep=large]
\attachmentSet
  \arrow[r,hook,"\attachmentInclusion"]
  \arrow[d,"\attachmentMap"']
& E \arrow[d,"\adjunctionInclusion"]\\
N \arrow[r,"\adjunctionTargetEmbedding"']
& \adjunctionSpace.
\end{tikzcd}
\end{equation}
Evaluating the commuting identity on
$(m,0)$
and
$(a,\attachmentHeight)$
yields the two identities in the statement.
For readability,
we write
$n$
for
$\adjunctionTargetEmbedding(n)$
inside
$\adjunctionSpace$.

Every admissible polygonal path with both endpoints in
$E$
may be shortened so that it is either one segment in
$E$
or leaves
$E$
once and returns once.
To see this,
consider an internal segment
$[u,v]_E$
whose endpoints belong to
$\attachmentSet$.
The corresponding segment in
$N$
satisfies
\[
\begin{aligned}
\pathLength([\attachmentMap(u),\attachmentMap(v)]_N)
&=\norm{\attachmentMap(u)-\attachmentMap(v)}_{N}\\
&\le \norm{u-v}_{E}
=\pathLength([u,v]_E),
\end{aligned}
\]
because
$\attachmentMap$
is 1-Lipschitz.
Replace every internal
$E$
segment in this way.
Consecutive segments in
$N$
can then be replaced by the affine segment joining their endpoints,
whose length is no larger by the triangle inequality in
$N$.
If the original path never enters
$N$,
the triangle inequality in
$E$
replaces it by the affine segment joining its endpoints.
This gives the asserted two forms without increasing length.
Consequently,
for
$x_0,x_1\in E$,
\begin{equation}\label{eq:adjunction-e-distance}
\begin{split}
&\adjunctionMetric(\adjunctionInclusion(x_0),\adjunctionInclusion(x_1))\\
&\quad=\min\biggl\{
\norm{x_0-x_1}_{E},\\
&\qquad\inf_{u,v\in \attachmentSet}
\bigl(
\norm{x_0-u}_{E}
+\norm{\attachmentMap(u)-\attachmentMap(v)}_{N}
+\norm{v-x_1}_{E}
\bigr)
\biggr\}.
\end{split}
\end{equation}
The second term on the right hand side of
\eqref{eq:adjunction-e-distance}
is the infimum of the lengths of the polygonal paths
$\mathcal C_{u,v}$
defined by
\[
\mathcal C_{u,v}
=[x_0,u]_E
*[\attachmentMap(u),\attachmentMap(v)]_N
*[v,x_1]_E.
\]
Thus
\[
\pathLength(\mathcal C_{u,v})
=\norm{x_0-u}_{E}
+\norm{\attachmentMap(u)-\attachmentMap(v)}_{N}
+\norm{v-x_1}_{E}.
\]
For
$n\in N$,
the corresponding formula is
\begin{equation}\label{eq:adjunction-n-distance}
\adjunctionMetric(\adjunctionInclusion(x_0),n)
=\inf_{u\in \attachmentSet}
\bigl(
\norm{x_0-u}_{E}+\norm{\attachmentMap(u)-n}_{N}
\bigr).
\end{equation}

The same path replacement shows that
$\adjunctionTargetEmbedding$
is an isometric embedding.
The map
$\adjunctionInclusion$
is injective.
Indeed,
if
$\adjunctionMetric(\adjunctionInclusion(x_0),\adjunctionInclusion(x_1))=0$,
then
\eqref{eq:adjunction-e-distance}
shows that either
$\norm{x_0-x_1}_{E}=0$
or the second term on its right hand side is zero.
In the latter case,
there are
$u_k,v_k\in \attachmentSet$
such that
\[
\norm{x_0-u_k}_{E}\to 0,
\qquad
\norm{x_1-v_k}_{E}\to 0,
\qquad
\norm{\attachmentMap(u_k)-\attachmentMap(v_k)}_{N}\to 0.
\]
Closedness of
$\attachmentSet$
places
$x_0$
and
$x_1$
in
$\attachmentSet$.
Continuity and injectivity of
$\attachmentMap$
then imply
$x_0=x_1$.
Thus
$x_0=x_1$
in either case.

We next show that
$\adjunctionInclusion$
preserves every distance at most
$r$.
Write
\[
x_0=(m_0,s_0),
\qquad
x_1=(m_1,s_1),
\qquad
d=\norm{x_0-x_1}_{E}\le r.
\]
For the two points of
$\attachmentSet$
in the second term of
\eqref{eq:adjunction-e-distance},
there are three possibilities.
Put
\[
t=(a,\attachmentHeight).
\]
\begin{enumerate}[label=\textup{(A\arabic*)},ref=\textup{(A\arabic*)}]
\item\label{item:two-base-attachments}
If both points belong to
$M\times\set{0}$,
write them as
$(u,0)$
and
$(v,0)$.
Since
$\norm{m_0-m_1}_{M}\le d\le r$,
define the comparison path in
$N$
by
\[
\baseComparisonPath{u}{v}
=[V(m_0),V(u)]_N
*[V(u),V(v)]_N
*[V(v),V(m_1)]_N.
\]
Its length satisfies
\begin{align*}
\norm{m_0-m_1}_{M}
&=\norm{V(m_0)-V(m_1)}_{N}\\
&\le \pathLength(\baseComparisonPath{u}{v})\\
&=\norm{V(m_0)-V(u)}_{N}
+\norm{V(u)-V(v)}_{N}
+\norm{V(v)-V(m_1)}_{N}\\
&\le \norm{m_0-u}_{M}
+\norm{V(u)-V(v)}_{N}
+\norm{v-m_1}_{M}.
\end{align*}
Here the equality uses
\eqref{eq:short-scale},
the first inequality is the triangle inequality along
$\baseComparisonPath{u}{v}$,
and the final inequality uses
\eqref{eq:global-one-lipschitz}.
The corresponding path in
$\adjunctionSpace$
is
\[
\mathcal C_{(u,0),(v,0)}
=[x_0,(u,0)]_E
*[V(u),V(v)]_N
*[(v,0),x_1]_E.
\]
Adding its three segment lengths gives
\begin{align*}
\pathLength(\mathcal C_{(u,0),(v,0)})
&=\norm{x_0-(u,0)}_{E}
+\norm{V(u)-V(v)}_{N}
+\norm{(v,0)-x_1}_{E}\\
&=\norm{m_0-u}_{M}+\abs{s_0}
+\norm{V(u)-V(v)}_{N}
+\norm{v-m_1}_{M}+\abs{s_1}\\
&\ge \norm{m_0-m_1}_{M}+\abs{s_0-s_1}\\
&=\norm{x_0-x_1}_{E}=d.
\end{align*}
\item\label{item:two-top-attachments}
If both points equal
$t$,
the corresponding path is
\[
\mathcal C_{t,t}
=[x_0,t]_E*[y,y]_N*[t,x_1]_E.
\]
The middle segment is degenerate.
Consequently,
the triangle inequality in
$E$
gives
\begin{align*}
\pathLength(\mathcal C_{t,t})
&=\norm{x_0-t}_{E}
+\pathLength([y,y]_N)
+\norm{t-x_1}_{E}\\
&=\norm{x_0-t}_{E}+\norm{t-x_1}_{E}\\
&\ge \norm{x_0-x_1}_{E}=d.
\end{align*}
\item\label{item:mixed-attachments}
If one point belongs to
$M\times\set{0}$
and the other equals
$t$,
write the first point as
$(u,0)$.
Suppose first that the two points occur in the order
$(u,0),t$.
The corresponding path in
$\adjunctionSpace$
is
\[
\mathcal C_{(u,0),t}
=[x_0,(u,0)]_E
*[V(u),y]_N
*[t,x_1]_E.
\]
To estimate its two
$E$
segments,
consider the comparison path in
$E$
defined by
\[
\mixedAttachmentComparisonPath{u}
=[(u,0),x_0]_E*[x_0,x_1]_E*[x_1,t]_E.
\]
This path joins
$(u,0)$
to
$t$.
Since
\[
\norm{(u,0)-t}_{E}
=\norm{u-a}_{M}+\attachmentHeight
\ge \attachmentHeight,
\]
the triangle inequality along
$\mixedAttachmentComparisonPath{u}$
gives
\begin{align*}
\attachmentHeight
&\le \norm{(u,0)-t}_{E}\\
&\le \pathLength(\mixedAttachmentComparisonPath{u})\\
&=\norm{x_0-(u,0)}_{E}
+\norm{x_0-x_1}_{E}
+\norm{t-x_1}_{E}\\
&=\norm{x_0-(u,0)}_{E}+d+\norm{t-x_1}_{E}.
\end{align*}
Therefore,
\[
\norm{x_0-(u,0)}_{E}+\norm{t-x_1}_{E}
\ge \attachmentHeight-d.
\]
Adding the three segment lengths of
$\mathcal C_{(u,0),t}$
and using the nonnegativity of the middle length now yields
\begin{align*}
\pathLength(\mathcal C_{(u,0),t})
&=\norm{x_0-(u,0)}_{E}
+\norm{V(u)-y}_{N}
+\norm{t-x_1}_{E}\\
&\ge \attachmentHeight-d.
\end{align*}
Since
$\attachmentHeight>2r\ge 2d$,
this lower bound is greater than
$d$.
When the two attachment points occur in the opposite order,
the corresponding path is
\[
\mathcal C_{t,(u,0)}
=[x_0,t]_E
*[y,V(u)]_N
*[(u,0),x_1]_E.
\]
In this order,
use the comparison path
\[
\oppositeMixedAttachmentComparisonPath{u}
=[t,x_0]_E*[x_0,x_1]_E*[x_1,(u,0)]_E.
\]
It joins
$t$
to
$(u,0)$,
so the same triangle inequality gives
\begin{align*}
\attachmentHeight
&\le \norm{t-(u,0)}_{E}\\
&\le \pathLength(\oppositeMixedAttachmentComparisonPath{u})\\
&=\norm{x_0-t}_{E}+d+\norm{(u,0)-x_1}_{E}.
\end{align*}
It follows that
\begin{align*}
\pathLength(\mathcal C_{t,(u,0)})
&=\norm{x_0-t}_{E}
+\norm{y-V(u)}_{N}
+\norm{(u,0)-x_1}_{E}\\
&\ge \attachmentHeight-d>d.
\end{align*}
\end{enumerate}
The cases
\ref{item:two-base-attachments}--\ref{item:mixed-attachments}
exhaust the second term in
\eqref{eq:adjunction-e-distance}.
It follows from
\eqref{eq:adjunction-e-distance}
that
\begin{equation}\label{eq:q-short-isometry}
\adjunctionMetric(\adjunctionInclusion(x_0),\adjunctionInclusion(x_1))=\norm{x_0-x_1}_{E}
\end{equation}
whenever
$\norm{x_0-x_1}_{E}\le r$.
\end{proof}

The next lemma turns the metric coordinate
$\adjunctionInclusion$
into a coordinate taking values in a Banach space without changing the required short distances.

\begin{lemma}\label{lem:relative-envelope}
Retain the hypotheses and notation of
\Cref{lem:metric-adjunction}.
Use
$\adjunctionTargetEmbedding$
to regard
$N$
as a subset of
$\adjunctionSpace$,
and take
$0\in N\subset\adjunctionSpace$
as the distinguished basepoint of
$\adjunctionSpace$.
Let
$\gluingSubspace$
be the closed linear span of
$\set{(n,-\dirac{n})\mid n\in N}$
in
$\oneSum{N}{\freeSpace{\adjunctionSpace}}$,
and set
\[
\relativeEnvelope=
\bigl(
\oneSum{N}{\freeSpace{\adjunctionSpace}}
\bigr)/\gluingSubspace,
\]
with quotient map
$\relativeQuotientMap\colon
\oneSum{N}{\freeSpace{\adjunctionSpace}}\to\relativeEnvelope$.
Define
\[
\oldTargetEmbedding\colon N\to\relativeEnvelope,
\qquad
\oldTargetEmbedding(n)=\relativeQuotientMap(n,0),
\]
and
\[
\relativeEmbedding\colon\adjunctionSpace\to\relativeEnvelope,
\qquad
\relativeEmbedding(p)=\relativeQuotientMap(0,\dirac{p}).
\]
Then
$\oldTargetEmbedding$
is a linear isometric embedding,
$\relativeEmbedding$
is 1-Lipschitz,
and
\begin{equation}\label{eq:relative-maps-agree}
\relativeEmbedding\circ\adjunctionTargetEmbedding
=\oldTargetEmbedding.
\end{equation}
Moreover,
if
$x_0,x_1\in E$
and
$\norm{x_0-x_1}_{E}\le r$,
then
\begin{equation}\label{eq:relative-envelope-short}
\norm{\relativeEmbedding(\adjunctionInclusion(x_0))-\relativeEmbedding(\adjunctionInclusion(x_1))}_{\relativeEnvelope}
=\norm{x_0-x_1}_{E}.
\end{equation}
\end{lemma}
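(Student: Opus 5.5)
The plan is to describe the dual of $\relativeEnvelope$ concretely and then write down explicit norming functionals. Since $\dual{(\oneSum{N}{\freeSpace{\adjunctionSpace}})}=\inftySum{\dual{N}}{\pointedLip{\adjunctionSpace}}$, the dual of the quotient $\relativeEnvelope$ is the annihilator $\gluingSubspace^{\perp}$. This consists of the pairs $(\phi,f)$ with $\phi\in\dual{N}$ and $f\in\pointedLip{\adjunctionSpace}$ such that $\phi(n)=f(\adjunctionTargetEmbedding(n))$ for all $n\in N$, normed by $\max\set{\norm{\phi},\Lip(f)}$.

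The elementary parts come first.
\begin{itemize}
\item Linearity of $\oldTargetEmbedding$ is clear.
\item Since $(n,-\dirac{n})\in\gluingSubspace$, we get $\relativeQuotientMap(n,0)=\relativeQuotientMap(0,\dirac{n})$, which is \eqref{eq:relative-maps-agree}. Note that $\dirac{0}=0$ is consistent because $0\in N$ is the basepoint.
\item The quotient map has norm at most one, and \eqref{eq:free-isometry} holds, so $\norm{\relativeEmbedding(p)-\relativeEmbedding(q)}\le\norm{\dirac{p}-\dirac{q}}=\adjunctionMetric(p,q)$. Hence $\relativeEmbedding$ is 1-Lipschitz, and $\oldTargetEmbedding$ is 1-Lipschitz as well.
\item For the reverse inequality on $\oldTargetEmbedding$, fix $n$ and take a norming functional $\phi$ for $n$. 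By \Cref{lem:metric-adjunction}, $\adjunctionTargetEmbedding$ is isometric, so $\phi$ is 1-Lipschitz on $N\subset\adjunctionSpace$. Extend it by McShane to $f\in\pointedLip{\adjunctionSpace}$ with $\Lip(f)\le1$. Then $(\phi,f)\in\gluingSubspace^{\perp}$ has norm at most one and takes the value $\norm{n}$ at $\oldTargetEmbedding(n)$.
\end{itemize}

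For \eqref{eq:relative-envelope-short}, the inequality $\le$ follows from 1-Lipschitzness of $\relativeEmbedding$ together with the short-scale conclusion of \Cref{lem:metric-adjunction}. For $\ge$, write $p_i=\adjunctionInclusion(x_i)$ and $d=\norm{x_0-x_1}_E\le r$. I need a linear $\phi\in\dual{N}$ with $\norm{\phi}\le1$ and a 1-Lipschitz extension $h$ of $\phi$ from $N$ to $N\cup\set{p_0,p_1}$ with $h(p_0)-h(p_1)=d$. A final McShane extension to all of $\adjunctionSpace$ then produces the required element of $\gluingSubspace^{\perp}$. Such an $h$ exists precisely when one can choose values $a_0,a_1=a_0-d$ with $g(p_i)\le a_i\le f(p_i)$, where $f(p)=\inf_n(\phi(n)+\adjunctionMetric(p,n))$ and $g(p)=\sup_n(\phi(n)-\adjunctionMetric(p,n))$ are the extremal extensions. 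The remaining compatibility $\abs{a_0-a_1}\le\adjunctionMetric(p_0,p_1)$ is automatic from \Cref{lem:metric-adjunction}. So the key inequality to verify is
\[
\adjunctionMetric(p_0,n)+\adjunctionMetric(p_1,n')+\phi(n-n')\ge d
\quad\text{for all }n,n'\in N,
\]
together with its mirror image with the roles of $p_0,p_1$ exchanged in the appropriate way.

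This inequality is the main obstacle, and the choice of $\phi$ matters. Write $x_i=(m_i,s_i)$. I would take $\phi$ to be a norming functional for $V(m_0)-V(m_1)$. By \eqref{eq:short-scale} we have $\phi(V(m_0)-V(m_1))=\norm{m_0-m_1}_M$, and the $\abs{s_0-s_1}$ part of $d$ should come from the vertical distance that any route from $p_i$ into $N$ must travel. Expand each $\adjunctionMetric(p_i,\cdot)$ via \eqref{eq:adjunction-n-distance} with attachment points $u,v\in\attachmentSet$, and split into the cases \ref{item:two-base-attachments}--\ref{item:mixed-attachments} as before.
\begin{itemize}
\item When both attachments lie in $M\times\set{0}$, say $(u,0),(v,0)$, the left side is at least
\[
\norm{m_0-u}+\abs{s_0}+\norm{m_1-v}+\abs{s_1}+\phi(V(u)-V(v)).
\]
Here I would use $\phi(V(u)-V(v))\ge\phi(V(m_0)-V(m_1))-\norm{m_0-u}-\norm{m_1-v}$, which holds by \eqref{eq:global-one-lipschitz}, to get at least $\norm{m_0-m_1}+\abs{s_0}+\abs{s_1}\ge d$.
\item Cases involving the top point $t$ cost at least $\attachmentHeight-d>d$, by the same comparison paths as in the proof of \Cref{lem:metric-adjunction}.
\end{itemize}
If this choice of $\phi$ fails in some configuration, for instance when $p_0,p_1$ are near the top point $t$ and far from $N$, my fallback is to split into two regimes. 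When $\min_i\adjunctionMetric(p_i,N)\ge d$, I would take $\phi=0$ and $h=\adjunctionMetric(\cdot,N)$-type distance functions. Otherwise the points are near the base, and the argument above applies.
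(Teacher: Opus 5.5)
Your route is correct in substance and differs from the paper's, but one step is misstated and needs a short repair.

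\textbf{Comparison with the paper.} The paper splits according to whether $d\le\adjunctionMetric(p_0,N)+\adjunctionMetric(p_1,N)$. In that regime it uses the zero functional on $N$ and prescribes small values at $p_0,p_1$. Otherwise it shows that both nearest attachment points are base points, proves the collar formula \eqref{eq:collar-distance}, and only then uses a norming functional for $V(m_0)-V(m_1)$.

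You use that single norming functional $\phi$ in every configuration and check the extremal-extension criterion directly from \eqref{eq:adjunction-n-distance}. Since $\phi(n-n')\ge\phi(\attachmentMap(u)-\attachmentMap(v))-\norm{\attachmentMap(u)-n}_{N}-\norm{\attachmentMap(v)-n'}_{N}$, your key inequality reduces to
\[
\norm{x_0-u}_{E}+\norm{x_1-v}_{E}+\phi\bigl(\attachmentMap(u)-\attachmentMap(v)\bigr)\ge d
\qquad(u,v\in\attachmentSet).
\]
This is a four-case check modelled on \ref{item:two-base-attachments}--\ref{item:mixed-attachments}. It buys uniformity: there is no case split, no nearest-point analysis, and no collar formula. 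The paper in exchange gets explicit values at $p_0,p_1$ and a geometric picture of the collar.

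\textbf{Smaller remarks.}
\begin{itemize}
\item Your base--base computation is correct.
\item The second interval condition, $g(p_0)\le f(p_1)+d$, is automatic, since $\phi(n-n')\le\norm{n-n'}_{N}\le\adjunctionMetric(p_0,n)+d+\adjunctionMetric(p_1,n')$. A literal exchange of $p_0$ and $p_1$ in your key inequality would be false: take $x_i=(m_i,0)$, $n=V(m_1)$, $n'=V(m_0)$.
\item The fallback is unnecessary. Its claim that $\min_i\adjunctionMetric(p_i,N)<d$ puts the points near the base is also wrong, because points near $t$ are close to $y\in N$.
\end{itemize}

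\textbf{The misstated step.} You claim that every case involving $t$ costs at least $\attachmentHeight-d$. This is not right as stated.
\begin{itemize}
\item For $u=v=t$, the functional term is $\phi(y-y)=0$. The correct bound is $\norm{x_0-t}_{E}+\norm{x_1-t}_{E}\ge d$, exactly as in \ref{item:two-top-attachments}, and this suffices.
\item In the mixed case $u=t$, $v=(v',0)$, the middle term is not a nonnegative length as in \ref{item:mixed-attachments}. It is the signed number $\phi(y-V(v'))$. By \eqref{eq:global-one-lipschitz} it is only bounded below by $-\norm{y-V(v')}_{N}\ge-(\attachmentGap+\norm{a-v'}_{M})$.
\item Combining this with $\norm{x_0-t}_{E}+\norm{x_1-(v',0)}_{E}\ge\norm{t-(v',0)}_{E}-d=\attachmentHeight+\norm{a-v'}_{M}-d$ gives only the lower bound $\attachmentHeight-\attachmentGap-d$.
\item The hypothesis $\attachmentHeight>\attachmentGap+4r$, which your sketch never uses, makes this at least $d$. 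The opposite order is symmetric.
\end{itemize}
With $\attachmentHeight-d$ replaced by $\attachmentHeight-\attachmentGap-d$, your argument is complete. This is exactly where $\attachmentGap$ enters your proof, just as it enters the paper's through the collar formula.
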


\begin{proof}
The dual of the space before taking the quotient is
\[
\bigl(\oneSum{N}{\freeSpace{\adjunctionSpace}}\bigr)^*
=\inftySum{\dual{N}}{\pointedLip{\adjunctionSpace}}.
\]
For
$\targetFunctional\in\dual{N}$
and
$f\in\pointedLip{\adjunctionSpace}$,
define the linear functional
\begin{equation}\label{eq:relative-dual-action}
\begin{aligned}
\relativeDualFunctional{\targetFunctional}{f}
&\colon\oneSum{N}{\freeSpace{\adjunctionSpace}}\to\R,
\\
\relativeDualFunctional{\targetFunctional}{f}(n,\mu)
&=\targetFunctional(n)+\mu(f).
\end{aligned}
\end{equation}
This functional annihilates
$\gluingSubspace$
exactly when
\[
0=\relativeDualFunctional{\targetFunctional}{f}(n,-\dirac{n})
=\targetFunctional(n)-f(n)
\]
for every
$n\in N$.
Thus the annihilator condition is precisely
$f|_N=\targetFunctional$.
Whenever this condition holds,
we use the same symbol for the induced functional on
$\relativeEnvelope$.
In particular,
\begin{equation}\label{eq:relative-dual-quotient-action}
\relativeDualFunctional{\targetFunctional}{f}(\relativeQuotientMap(n,\mu))
=\targetFunctional(n)+\mu(f).
\end{equation}
The dual unit ball of
$\relativeEnvelope$
therefore consists of the pairs
$(\targetFunctional,f)$
satisfying
\[
\norm{\targetFunctional}_{\dual{N}}\le 1,
\qquad
f\in\pointedLip{\adjunctionSpace},
\qquad
\Lip(f)\le 1,
\qquad
f|_N=\targetFunctional.
\]
For
$n\in N$,
the Hahn--Banach theorem provides
$\targetFunctional\in\dual{N}$
such that
$\norm{\targetFunctional}_{\dual{N}}\le 1$
and
$\targetFunctional(n)=\norm{n}_{N}$.
Thus
$\targetFunctional$
is a norming functional for
$n$.
Extend
$\targetFunctional$
to
$\adjunctionSpace$
by the McShane theorem.
If
$f$
denotes this extension,
then
\eqref{eq:relative-dual-quotient-action}
implies
\[
\relativeDualFunctional{\targetFunctional}{f}(\oldTargetEmbedding(n))
=\targetFunctional(n)
=\norm{n}_{N}.
\]
The quotient map yields the reverse estimate because
\[
\begin{aligned}
\norm{\oldTargetEmbedding(n)}_{\relativeEnvelope}
&=\norm{\relativeQuotientMap(n,0)}_{\relativeEnvelope}\\
&=\inf_{z\in\gluingSubspace}
\norm{(n,0)+z}_{\oneSum{N}{\freeSpace{\adjunctionSpace}}}\\
&\le \norm{(n,0)}_{\oneSum{N}{\freeSpace{\adjunctionSpace}}}\\
&=\norm{n}_{N}.
\end{aligned}
\]
The inequality uses
$0\in\gluingSubspace$.
Hence
$\oldTargetEmbedding\colon N\to \relativeEnvelope$
is a linear isometry.
The map
$\relativeEmbedding$
is 1-Lipschitz because
$\relativeQuotientMap$
is 1-Lipschitz and
$\freeEmbedding{\adjunctionSpace}$
is an isometric embedding.
The quotient relation also yields
\eqref{eq:relative-maps-agree}.

We claim that
$\relativeEmbedding\circ\adjunctionInclusion$
preserves every distance at most
$r$.
Fix
$x_0=(m_0,s_0)$
and
$x_1=(m_1,s_1)$
with
\[
d=\norm{x_0-x_1}_{E}\le r,
\qquad
p_i=\adjunctionInclusion(x_i),
\qquad
a_i=\adjunctionMetric(p_i,N).
\]
By
\eqref{eq:q-short-isometry},
we also have
$d=\adjunctionMetric(p_0,p_1)$.

Assume first that
\[
d\le a_0+a_1.
\]
Set
\[
c_0=\min\set{d,a_0}
\qquad\text{and}\qquad
c_1=c_0-d.
\]
Then
$0\le c_0\le a_0$
and
\[
\abs{c_1}=d-c_0=\max\set{d-a_0,0}\le a_1
\]
because
$d\le a_0+a_1$.
Thus
$\abs{c_i}\le a_i$
for
$i\in\set{0,1}$,
and
$c_0-c_1=d$.
Define
$g\colon N\cup\set{p_0,p_1}\to\R$
by
$g|_N=0$
and
$g(p_i)=c_i$.
The function
$g$
is 1-Lipschitz.
Indeed,
for
$n\in N$,
we have
\[
\abs{g(p_i)-g(n)}=\abs{c_i}
\le a_i
\le\adjunctionMetric(p_i,n),
\]
while
\[
\abs{g(p_0)-g(p_1)}=d=\adjunctionMetric(p_0,p_1).
\]
Retain the symbol
$g$
for its McShane extension to
$\adjunctionSpace$.
The functional
$\relativeDualFunctional{0}{g}$
belongs to the dual unit ball and satisfies
\[
\relativeDualFunctional{0}{g}
\bigl(\relativeEmbedding(p_0)-\relativeEmbedding(p_1)\bigr)
=g(p_0)-g(p_1)
=d.
\]
It therefore yields the required lower bound for the
$\relativeEnvelope$
norm of the selected pair.

Assume now that
\begin{equation}\label{eq:collar-case}
d>a_0+a_1.
\end{equation}
By
\eqref{eq:adjunction-n-distance},
we have
\begin{align*}
a_i
&=\dist_{\adjunctionSpace}(p_i,N)\\
&=\inf_{n\in N}\adjunctionMetric(p_i,n)\\
&=\inf_{n\in N}\inf_{u\in\attachmentSet}
\bigl(
\norm{x_i-u}_{E}
+\norm{\attachmentMap(u)-n}_{N}
\bigr)\\
&=\inf_{u\in\attachmentSet}\norm{x_i-u}_{E}\\
&=\dist_E(x_i,\attachmentSet).
\end{align*}
For the penultimate equality,
the expression inside the two infima is at least
$\norm{x_i-u}_{E}$.
Conversely,
for each
$u\in\attachmentSet$,
the choice
$n=\attachmentMap(u)$
makes its second term zero.
The form of
$\attachmentSet$
shows that this distance is attained either at a point of
$M\times\set{0}$
or at
$(a,\attachmentHeight)$.
Put
\[
b_i=(m_i,0)
\qquad\text{and}\qquad
t=(a,\attachmentHeight).
\]
Since
$\dist_E(x_i,M\times\set{0})=\abs{s_i}$
is attained at
$b_i$,
there is
$u_i\in\set{b_i,t}$
such that
\[
a_i=\norm{x_i-u_i}_{E}.
\]
Let
\[
\minimizingPath{i}=[x_i,u_i]_E
\]
be the corresponding affine segment in
$E$.
Its length is
$a_i$.

If
$u_0=u_1=t$,
then
\[
\minimizingPath{0}*\overline{\minimizingPath{1}}
\]
is an admissible polygonal path from
$p_0$
to
$p_1$
of length
$a_0+a_1<d$.
This contradicts
$\adjunctionMetric(p_0,p_1)=d$.

The two minimizers cannot have different forms either.
After interchanging the indices if necessary,
assume that
$u_0=b_0$
and
$u_1=t$.
The polygonal path in
$E$
given by
\[
\mixedPath
=[b_0,x_0]_E*[x_0,x_1]_E*[x_1,t]_E
\]
joins
$b_0$
to
$t$
and has length
\[
\pathLength(\mixedPath)
=a_0+d+a_1
<2d
\le 2r.
\]
On the other hand,
\[
\norm{b_0-t}_{E}
=\norm{m_0-a}_{M}+\attachmentHeight
\ge \attachmentHeight
>4r.
\]
This contradicts the triangle inequality in
$E$.
Consequently,
$u_i=b_i$
for
$i\in\set{0,1}$,
and hence
$a_i=\abs{s_i}$.
In particular,
\begin{equation}\label{eq:small-vertical-legs}
\abs{s_0}+\abs{s_1}<d\le r.
\end{equation}

For
$\abs{s}<r$
and
$n\in N$,
the adjunction formula reduces to
\begin{equation}\label{eq:collar-distance}
\adjunctionMetric(\adjunctionInclusion(m,s),n)
=\abs{s}+\norm{V(m)-n}_{N}.
\end{equation}
To see this,
consider the base route
\[
\baseRoute{m}{s}{n}
=[(m,s),(m,0)]_E*[V(m),n]_N
\]
and the top route
\[
\topRoute{m}{s}{n}
=[(m,s),t]_E*[y,n]_N.
\]
The base route has length
\[
\pathLength(\baseRoute{m}{s}{n})
=\abs{s}+\norm{V(m)-n}_{N}.
\]
Equation
\eqref{eq:global-one-lipschitz}
shows that a base route through
$(u,0)$
has length at least
\[
\abs{s}+\norm{m-u}_{M}+\norm{V(u)-n}_{N}
\ge \pathLength(\baseRoute{m}{s}{n}).
\]
The top route satisfies
\[
\pathLength(\topRoute{m}{s}{n})
\ge
\norm{m-a}_{M}
+\attachmentHeight
-\abs{s}
+\norm{y-n}_{N}.
\]
The triangle inequality and the definition of
$\attachmentGap$
also imply
\[
\pathLength(\baseRoute{m}{s}{n})
\le
\abs{s}
+\norm{m-a}_{M}
+\attachmentGap
+\norm{y-n}_{N}.
\]
It follows that
\[
\pathLength(\topRoute{m}{s}{n})
-\pathLength(\baseRoute{m}{s}{n})
\ge
\attachmentHeight-\attachmentGap-2\abs{s}
>0
\]
because
$\abs{s}<r$
and
$\attachmentHeight>\attachmentGap+4r$.
Thus no route through
$t$
is shorter than
$\baseRoute{m}{s}{n}$,
which proves
\eqref{eq:collar-distance}.

Since
$\norm{m_0-m_1}_{M}\le d\le r$,
equation
\eqref{eq:short-scale}
implies
\[
\norm{V(m_0)-V(m_1)}_{N}
=\norm{m_0-m_1}_{M}.
\]
The Hahn--Banach theorem provides
$\targetFunctional\in \dual{N}$
with
$\norm{\targetFunctional}_{\dual{N}}\le 1$
such that
\[
\targetFunctional(V(m_0)-V(m_1))
=\norm{V(m_0)-V(m_1)}_{N}
=\norm{m_0-m_1}_{M}.
\]
Put
$d_s=\abs{s_0-s_1}$
and set
\[
c_0=\min\set{d_s,\abs{s_0}}
\qquad\text{and}\qquad
c_1=c_0-d_s.
\]
We have
$0\le c_0\le\abs{s_0}$
and
\[
\abs{c_1}=d_s-c_0
=\max\set{d_s-\abs{s_0},0}
\le\abs{s_1}.
\]
The last inequality follows from
$d_s=\abs{s_0-s_1}\le\abs{s_0}+\abs{s_1}$.
Thus
$c_0-c_1=d_s$.
On
$N\cup\set{p_0,p_1}$,
set
\[
f|_N=\targetFunctional,
\qquad
f(p_i)=\targetFunctional(V(m_i))+c_i.
\]
For
$n\in N$,
the  formula
\eqref{eq:collar-distance}
implies
\begin{align*}
\abs{f(p_i)-f(n)}
&\le \norm{V(m_i)-n}_{N}+\abs{c_i}\\
&\le \norm{V(m_i)-n}_{N}+\abs{s_i}\\
&=\adjunctionMetric(p_i,n).
\end{align*}
Its difference on the selected pair is
\[
f(p_0)-f(p_1)
=\norm{m_0-m_1}_{M}+\abs{s_0-s_1}
=d.
\]
Hence
$f$
is 1-Lipschitz on
$N\cup\set{p_0,p_1}$.
Retain the symbol
$f$
for its McShane extension to
$\adjunctionSpace$.
It is an admissible dual functional.
By
\eqref{eq:relative-dual-quotient-action},
it satisfies
\[
\relativeDualFunctional{\targetFunctional}{f}
\bigl(\relativeEmbedding(p_0)-\relativeEmbedding(p_1)\bigr)
=f(p_0)-f(p_1)
=d.
\]
Together with the 1-Lipschitz property of
$\relativeEmbedding\circ\adjunctionInclusion$,
the two cases prove
\eqref{eq:relative-envelope-short}.
\end{proof}

The two auxiliary lemmas now supply the coordinates needed for the successor mechanism.
The next lemma adds a coordinate which separates points and constructs the retraction used at limit stages.

\begin{lemma}\label{lem:protected-extension}
Let
$r,L>0$,
and let
\[
\mathcal A
=\bigl(
\tripleSource{\mathcal A},
\tripleTarget{\mathcal A},
\tripleMap{\mathcal A}
\bigr)
\]
be a uniformly locally isometric triple at scale
$r$.
For every
\[
y\in
\tripleTarget{\mathcal A}
\setminus
\tripleMap{\mathcal A}(\tripleSource{\mathcal A}),
\]
there exist a triple
$\mathcal B$
and maps
$\protectedSourceEmbedding{\mathcal A}{\mathcal B}$,
$\protectedTargetEmbedding{\mathcal A}{\mathcal B}$,
and
$\protectedExtensionRetraction{\mathcal A}{\mathcal B}$
which make
$\mathcal B$
an
$L$-protected extension of
$\mathcal A$
at
$y$.
\end{lemma}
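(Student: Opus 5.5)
The plan is to build $\mathcal B$ from the spaces of \Cref{lem:metric-adjunction} and \Cref{lem:relative-envelope}. Write $M=\tripleSource{\mathcal A}$, $N=\tripleTarget{\mathcal A}$ and $V=\tripleMap{\mathcal A}$. Fix any $a\in M$ and put $\attachmentGap=\norm{V(a)-y}_N$, which is positive since $y\notin V(M)$. Choose
\[
\attachmentHeight>\max\set{\attachmentGap+4r,\ L+\attachmentGap}.
\]
Let $E=\oneSum{M}{\R}$, and let $\adjunctionSpace$, $\adjunctionInclusion$, $\relativeEnvelope$, $\oldTargetEmbedding$ and $\relativeEmbedding$ be as in those two lemmas. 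Then $\relativeEmbedding\circ\adjunctionInclusion\colon E\to\relativeEnvelope$ preserves distances at most $r$ by \eqref{eq:relative-envelope-short}. It also satisfies
\[
\relativeEmbedding\adjunctionInclusion(m,0)=\oldTargetEmbedding V(m)
\qquad\text{and}\qquad
\relativeEmbedding\adjunctionInclusion(a,\attachmentHeight)=\oldTargetEmbedding(y),
\]
by \eqref{eq:relative-maps-agree}. This coordinate may fail to be injective, so we add a separating coordinate.

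Let $\quotientSpace$ be the metric quotient of $\adjunctionSpace$ in which $N$ is collapsed to a basepoint $\quotientBasepoint$, with
\[
\quotientMetric(z,w)=\min\set{\adjunctionMetric(z,w),\ \adjunctionMetric(z,N)+\adjunctionMetric(w,N)}.
\]
Let $\metricQuotientMap\colon\adjunctionSpace\to\quotientSpace$ be the 1-Lipschitz quotient map. Let $\kuratowskiEmbedding\colon\quotientSpace\to\ell_\infty(\quotientSpace)$ be the Kuratowski embedding normalized at $\quotientBasepoint$, so $\kuratowskiEmbedding(\quotientBasepoint)=0$. Put $\kuratowskiSpan$ for the closed span of its image.

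Set $F=\inftySum{\relativeEnvelope}{\kuratowskiSpan}$, $\protectedTargetEmbedding{\mathcal A}{\mathcal B}(n)=(\oldTargetEmbedding(n),0)$, and
\[
\extendedMap(x)=\bigl(\relativeEmbedding\adjunctionInclusion(x),\ \kuratowskiEmbedding\metricQuotientMap\adjunctionInclusion(x)\bigr).
\]

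\textbf{Checking (E1)--(E4).} The second coordinate of $\extendedMap$ is 1-Lipschitz by \eqref{eq:global-one-lipschitz} and the lemma. Since $F$ carries the maximum norm, the short-scale isometry of the first coordinate is preserved and $\extendedMap$ is globally 1-Lipschitz.

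For injectivity, suppose $\extendedMap(x_0)=\extendedMap(x_1)$. Then $\metricQuotientMap\adjunctionInclusion(x_0)=\metricQuotientMap\adjunctionInclusion(x_1)$. If this common point is not $\quotientBasepoint$, then $\adjunctionInclusion(x_0)=\adjunctionInclusion(x_1)$, because $\metricQuotientMap$ is injective off $N$ ($\quotientMetric(z,w)>0$ for distinct $z,w$ outside the closed set $N$). Otherwise both $\adjunctionInclusion(x_i)$ lie in $N$, say $\adjunctionInclusion(x_i)=n_i$. Then $\oldTargetEmbedding(n_0)=\oldTargetEmbedding(n_1)$ by \eqref{eq:relative-maps-agree}, so $n_0=n_1$ because $\oldTargetEmbedding$ is isometric. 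In both cases $x_0=x_1$ by injectivity of $\adjunctionInclusion$.

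The commutation (E3) holds because $\metricQuotientMap$ sends $N$ to $\quotientBasepoint$ and $\kuratowskiEmbedding(\quotientBasepoint)=0$. The hit condition (E4) holds because $\extendedMap(a,\attachmentHeight)=\protectedTargetEmbedding{\mathcal A}{\mathcal B}(y)$. Condition (E2) holds by definition.

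\textbf{The retraction (E5)--(E6).} This is the main step. We construct a 1-Lipschitz map $\adjunctionRetraction\colon\adjunctionSpace\to N$ that is the identity on $N$ and satisfies $\adjunctionRetraction\adjunctionInclusion(m,s)=V(m)$ for $\abs{s}\le L$.

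Choose a 1-Lipschitz-in-slope function $\cutoffMap\colon\R\to[0,1]$ with $\cutoffMap=0$ on $(-\infty,L]$, $\cutoffMap(\attachmentHeight)=1$, and $\Lip(\cutoffMap)\le 1/\attachmentGap$; this is possible because $\attachmentHeight-L>\attachmentGap$. Define
\[
\sourceRetraction(m,s)=V(m)+\cutoffMap(s)\bigl(y-V(a)\bigr).
\]
Its oscillation in $m$ is controlled by \eqref{eq:global-one-lipschitz}, and in $s$ by $\abs{\Delta\cutoffMap}\,\attachmentGap\le\abs{\Delta s}$. Hence $\sourceRetraction$ is 1-Lipschitz for the $\ell_1$ sum norm. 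It also agrees with $\attachmentMap$ on $\attachmentSet$: it gives $V(m)$ at $(m,0)$ and $y$ at $(a,\attachmentHeight)$.

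Gluing $\sourceRetraction$ with $\operatorname{id}_N$ gives a well-defined $\adjunctionRetraction$. It is 1-Lipschitz for $\adjunctionMetric$, since it does not increase the length of any admissible polygonal path, segment by segment. It is pointed, since $0\in N$. By the linearization property of $\freeSpace{\adjunctionSpace}$ we obtain $\linearizedAdjunctionRetraction$. The map $(n,\mu)\mapsto n-\linearizedAdjunctionRetraction(\mu)$ would kill nothing useful, so instead take
\[
(n,\mu)\mapsto n+\linearizedAdjunctionRetraction(\mu).
\]
This is 1-Lipschitz on the $\oneSum{}{}$ sum and vanishes on $\gluingSubspace$, because $n-\adjunctionRetraction(n)=0$. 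It therefore descends to a 1-Lipschitz linear $\relativeRetraction\colon\relativeEnvelope\to N$.

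Set $\protectedExtensionRetraction{\mathcal A}{\mathcal B}=\relativeRetraction\circ\firstCoordinateProjection$. This is 1-Lipschitz for the maximum norm. It satisfies (E5) since $\relativeRetraction\oldTargetEmbedding(n)=n$. It satisfies (E6) since
\[
\relativeRetraction\relativeEmbedding\adjunctionInclusion(m,s)=\adjunctionRetraction\adjunctionInclusion(m,s)=V(m)
\qquad\text{for }\abs{s}\le L.
\]
The delicate points are the choice of $\cutoffMap$, which dictates the extra lower bound on $\attachmentHeight$, and the verification that $\quotientMetric$ separates points off $N$.
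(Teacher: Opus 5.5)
Your proposal is correct and follows essentially the same construction as the paper. You use the same target $F=\inftySum{\relativeEnvelope}{\kuratowskiSpan}$, with the Kuratowski coordinate of the quotient collapsing $N$ supplying injectivity, and the same cutoff retraction $\sourceRetraction$ glued to $\operatorname{id}_N$, linearized through $\freeSpace{\adjunctionSpace}$ and pushed down to $\relativeEnvelope$. The remaining differences are cosmetic and do not affect the argument: your height condition $\attachmentHeight>\max\set{\attachmentGap+4r,L+\attachmentGap}$ is slightly weaker than $\attachmentHeight>\attachmentGap+L+4r$, and you use $\ell_\infty(\quotientSpace)$ instead of $\boundedContinuous{\quotientSpace}$.
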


\begin{proof}
For the construction,
write
\[
M=\tripleSource{\mathcal A},
\qquad
N=\tripleTarget{\mathcal A},
\qquad
V=\tripleMap{\mathcal A}.
\]
Fix
$y\in N\setminus V(M)$
and
$a\in M$.
Put
$\attachmentGap=\norm{V(a)-y}_{N}$
and choose
$\attachmentHeight>\attachmentGap+L+4r$.
Since
$L>0$,
this choice also satisfies the height condition in
\Cref{lem:metric-adjunction}.
Apply
\Cref{lem:metric-adjunction}
with these choices,
and retain its notation
$E$,
$\sourceEmbedding$,
$\attachmentSet$,
$\attachmentMap$,
$\adjunctionSpace$,
$\adjunctionInclusion$,
and
$\adjunctionTargetEmbedding$.
Apply
\Cref{lem:relative-envelope}
to the resulting adjunction
and retain its notation
$\relativeEnvelope$,
$\relativeQuotientMap$,
$\oldTargetEmbedding$,
and
$\relativeEmbedding$.

The copy of
$N$
is closed in
$\adjunctionSpace$.
Indeed,
for
$x\notin \attachmentSet$,
we have
$\adjunctionMetric(\adjunctionInclusion(x),N)=\dist(x,\attachmentSet)>0.$
Collapse
$N$
to one point and let
\[
\metricQuotientMap\colon\adjunctionSpace\to\quotientSpace=\adjunctionSpace/N
\]
be the quotient map.
Denote the collapsed point by
$\quotientBasepoint=\metricQuotientMap(0)$.
Its metric is
\[
\quotientMetric(\metricQuotientMap(p_0),\metricQuotientMap(p_1))
=\min\set{
\adjunctionMetric(p_0,p_1),
\adjunctionMetric(p_0,N)+\adjunctionMetric(p_1,N)
}.
\]
By
\cite[Proposition~2.6]{ishiki2023factorization},
this formula defines a metric,
and
$\metricQuotientMap$
is 1-Lipschitz and injective on
$\adjunctionSpace\setminus N$.
For
$z\in \quotientSpace$,
write
\[
\distanceFunction{z}\colon\quotientSpace\to\R,
\qquad
\distanceFunction{z}(u)=\quotientMetric(z,u)
\]
for
$u\in \quotientSpace$,
and let
$\boundedContinuous{\quotientSpace}$
be the Banach space of bounded continuous real functions on
$\quotientSpace$
with the supremum norm.
The map
$\kuratowskiEmbedding\colon \quotientSpace\to \boundedContinuous{\quotientSpace}$
defined by
$\kuratowskiEmbedding(z)=\distanceFunction{z}-\distanceFunction{\quotientBasepoint}$
is an isometric embedding and satisfies
$\kuratowskiEmbedding(\quotientBasepoint)=0$
by
\cite[Theorem~2.6]{ishiki2026interpolation}.
Let
\[
\kuratowskiSpan=\closedLinearSpan{\kuratowskiEmbedding(\quotientSpace)}
\subset \boundedContinuous{\quotientSpace}.
\]
Since
$\kuratowskiEmbedding(\quotientSpace)\subset\kuratowskiSpan$,
we henceforth regard
$\kuratowskiEmbedding$
as a map from
$\quotientSpace$
to
$\kuratowskiSpan$.
Define
$\injectivityCoordinate\colon \adjunctionSpace\to \kuratowskiSpan$
by
$\injectivityCoordinate=\kuratowskiEmbedding\circ\metricQuotientMap$.
The map
$\injectivityCoordinate$
is 1-Lipschitz,
vanishes on
$N$,
and separates two points unless both belong to
$N$.
Set
\[
F=\inftySum{\relativeEnvelope}{\kuratowskiSpan}.
\]
Define
\[
\targetEmbedding\colon N\to F,
\qquad
\targetEmbedding(n)=(\oldTargetEmbedding(n),0),
\]
and
\[
\combinedEmbedding\colon\adjunctionSpace\to F,
\qquad
\combinedEmbedding(p)=(\relativeEmbedding(p),\injectivityCoordinate(p)).
\]
The forward maps are summarized by
\begin{equation}\label{eq:forward-map-configuration}
\begin{gathered}
\adjunctionSpace
\xrightarrow{\ \metricQuotientMap\ }
\quotientSpace
\xrightarrow{\ \kuratowskiEmbedding\ }
\kuratowskiSpan,
\qquad
\injectivityCoordinate=\kuratowskiEmbedding\circ\metricQuotientMap,
\\
\combinedEmbedding=(\relativeEmbedding,\injectivityCoordinate)
\colon\adjunctionSpace\to\inftySum{\relativeEnvelope}{\kuratowskiSpan},
\qquad
\extendedMap=\combinedEmbedding\circ\adjunctionInclusion.
\end{gathered}
\end{equation}
The first coordinate
$\relativeEmbedding$
preserves the required short distances.
The second coordinate
$\injectivityCoordinate$
separates points not already separated inside the old target.
The map
$\combinedEmbedding$
is 1-Lipschitz and injective.
Equation
\eqref{eq:relative-maps-agree}
and the fact that
$\injectivityCoordinate$
vanishes on
$N$
show that
\begin{equation}\label{eq:combined-map-on-old-target}
\combinedEmbedding\circ\adjunctionTargetEmbedding=\targetEmbedding.
\end{equation}
The first coordinate and
\eqref{eq:relative-envelope-short}
show that
$\combinedEmbedding\circ\adjunctionInclusion$
preserves every distance at most
$r$.
By
\eqref{eq:forward-map-configuration},
$\extendedMap$
is injective.
\begin{equation}\label{diag:extension-square}
\begin{tikzcd}[column sep=large,row sep=large]
M \arrow[r,"\sourceEmbedding"] \arrow[d,"V"']
& E \arrow[d,"\extendedMap"]\\
N \arrow[r,"\targetEmbedding"']
& F.
\end{tikzcd}
\end{equation}
The square commutes because
$\adjunctionInclusion\circ\sourceEmbedding
=\adjunctionTargetEmbedding\circ V$,
and
equation
\eqref{eq:combined-map-on-old-target}
then implies commutativity.
The distinguished point at height
$\attachmentHeight$
also satisfies
\[
\extendedMap(a,\attachmentHeight)
=\combinedEmbedding(\adjunctionTargetEmbedding(y))
=\targetEmbedding(y).
\]
Set
\[
\mathcal B=(E,F,\extendedMap),
\qquad
\protectedSourceEmbedding{\mathcal A}{\mathcal B}=\sourceEmbedding,
\qquad
\protectedTargetEmbedding{\mathcal A}{\mathcal B}=\targetEmbedding.
\]
The construction so far proves
\ref{item:extension-scale}--\ref{item:extension-hit}.

It remains to prove
\ref{item:extension-retraction}
and
\ref{item:extension-recovery}.
Let
$\cutoffMap\colon\R\to[0,1]$
vanish on
$({-\infty},L]$,
be affine on
$[L,\attachmentHeight]$,
and equal one on
$[\attachmentHeight,\infty)$.
Define
\[
\sourceRetraction\colon E\to N,
\qquad
\sourceRetraction(m,s)
=V(m)+\cutoffMap(s)(y-V(a)).
\]
Since
\[
\Lip(\cutoffMap)=\frac{1}{\attachmentHeight-L}
\quad\text{and}\quad
\frac{\attachmentGap}{\attachmentHeight-L}<1,
\]
we have
\[
\begin{split}
&\norm{\sourceRetraction(m_0,s_0)-\sourceRetraction(m_1,s_1)}_{N}\\
&\quad\le
\norm{V(m_0)-V(m_1)}_{N}
+\attachmentGap\abs{\cutoffMap(s_0)-\cutoffMap(s_1)}\\
&\quad\le
\norm{m_0-m_1}_{M}
+\frac{\attachmentGap}{\attachmentHeight-L}\abs{s_0-s_1}\\
&\quad\le
\norm{m_0-m_1}_{M}+\abs{s_0-s_1}.
\end{split}
\]
Therefore,
the map
$\sourceRetraction\colon E\to N$
is 1-Lipschitz.
It agrees with
$\attachmentMap$
on
$\attachmentSet$.
Together with the identity on
$N$,
it descends to a pointed 1-Lipschitz retraction
$\adjunctionRetraction\colon \adjunctionSpace\to N$.
By construction,
\begin{equation}\label{eq:adjunction-recovery-maps}
\adjunctionRetraction\circ\adjunctionInclusion=\sourceRetraction,
\qquad
\adjunctionRetraction\circ\adjunctionTargetEmbedding=\operatorname{id}_N.
\end{equation}

By the universal property of
$\freeSpace{\adjunctionSpace}$,
the map
$\adjunctionRetraction$
has a unique 1-Lipschitz linear extension
\[
\linearizedAdjunctionRetraction\colon\freeSpace{\adjunctionSpace}\to N.
\]
It satisfies
\[
\linearizedAdjunctionRetraction(\dirac{p})=\adjunctionRetraction(p)
\]
for every
$p\in\adjunctionSpace$.
Define
\[
\quotientRetractionLift\colon\oneSum{N}{\freeSpace{\adjunctionSpace}}\to N,
\qquad
\quotientRetractionLift(n,\mu)=n+\linearizedAdjunctionRetraction(\mu).
\]
For every
$n\in N$
and
$\mu\in\freeSpace{\adjunctionSpace}$,
we have
\[
\norm{\quotientRetractionLift(n,\mu)}_{N}
\le \norm{n}_{N}+\norm{\linearizedAdjunctionRetraction(\mu)}_{N}
\le \norm{n}_{N}+\norm{\mu}_{\freeSpace{\adjunctionSpace}}.
\]
Thus
$\quotientRetractionLift$
is 1-Lipschitz.
Since
$\adjunctionRetraction(n)=n$
for
$n\in N$,
we have
\[
\quotientRetractionLift(n,-\dirac{n})
=n-\linearizedAdjunctionRetraction(\dirac{n})
=n-\adjunctionRetraction(n)
=0.
\]
It follows that
$\gluingSubspace\subset\ker \quotientRetractionLift$.
Consequently,
$\quotientRetractionLift$
induces a 1-Lipschitz linear map
\[
\relativeRetraction\colon \relativeEnvelope\to N,
\qquad
\relativeRetraction(\relativeQuotientMap(n,\mu))
=n+\linearizedAdjunctionRetraction(\mu).
\]
For
$n\in N$,
we have
\[
\relativeRetraction(\oldTargetEmbedding(n))=n.
\]
Hence
$\relativeRetraction$
is a linear retraction onto the old target.
Moreover,
the defining formula yields
\begin{equation}\label{eq:relative-recovery-map}
\relativeRetraction\circ\relativeEmbedding=\adjunctionRetraction.
\end{equation}

Define the first coordinate projection by
\[
\firstCoordinateProjection\colon F\to\relativeEnvelope,
\qquad
\firstCoordinateProjection(b,g)=b.
\]
Set
\[
\protectedRetraction
=\relativeRetraction\circ\firstCoordinateProjection
\colon F\to N.
\]
This is a 1-Lipschitz linear map,
and
$\protectedRetraction\circ\targetEmbedding=\operatorname{id}_N$.
The named maps now fit into the commutative recovery diagram
\begin{equation}\label{diag:recovery-maps}
\begin{tikzcd}[column sep=large,row sep=large]
E
  \arrow[r,"\adjunctionInclusion"]
  \arrow[d,"\sourceRetraction"']
& \adjunctionSpace
  \arrow[r,"\combinedEmbedding"]
  \arrow[d,"\adjunctionRetraction"']
& F
  \arrow[d,"\protectedRetraction"]\\
N \arrow[r,equal]
& N \arrow[r,equal]
& N.
\end{tikzcd}
\end{equation}
The left square is
\eqref{eq:adjunction-recovery-maps}.
The right square follows from
\eqref{eq:relative-recovery-map}
because
$\firstCoordinateProjection\circ\combinedEmbedding=\relativeEmbedding$.
Consequently,
\[
(\protectedRetraction\circ\extendedMap)(m,s)
=\sourceRetraction(m,s)
=V(m)+\cutoffMap(s)(y-V(a)).
\]
If
$\abs{s}\le L$,
then
$\cutoffMap(s)=0$,
and therefore
\[
(\protectedRetraction\circ\extendedMap)(m,s)=V(m).
\]
Finally,
set
\[
\protectedExtensionRetraction{\mathcal A}{\mathcal B}
=\protectedRetraction.
\]
The preceding identities prove
\ref{item:extension-retraction}
and
\ref{item:extension-recovery},
which completes the proof.
\end{proof}

\section{Coherent limit stages}\label{sec:coherent-limit-stages}
We record how the retractions in
\Cref{lem:protected-extension}
control completed increasing unions.
Preservation at the fixed distance scale alone does not ensure that the extension to a completed increasing union remains injective.
The retractions supply the missing coordinate recovery.

Consider a transfinite chain
$\set{(\sourceStage{\alpha},\targetStage{\alpha},\stageMap{\alpha})}_{\alpha<\tau}$
of Banach spaces and injective maps.
For
$\alpha\le\beta<\tau$,
let
\[
\sourceStageEmbedding{\alpha}{\beta}
\colon\sourceStage{\alpha}\to\sourceStage{\beta}
\quad\text{and}\quad
\targetStageEmbedding{\alpha}{\beta}
\colon\targetStage{\alpha}\to\targetStage{\beta}
\]
be linear isometric embeddings.
These stage embeddings are required to satisfy
\[
\sourceStageEmbedding{\alpha}{\alpha}
=\operatorname{id}_{\sourceStage{\alpha}},
\qquad
\sourceStageEmbedding{\beta}{\gamma}
\circ\sourceStageEmbedding{\alpha}{\beta}
=\sourceStageEmbedding{\alpha}{\gamma}
\]
and
\[
\targetStageEmbedding{\alpha}{\alpha}
=\operatorname{id}_{\targetStage{\alpha}},
\qquad
\targetStageEmbedding{\beta}{\gamma}
\circ\targetStageEmbedding{\alpha}{\beta}
=\targetStageEmbedding{\alpha}{\gamma}
\]
whenever
$\alpha\le\beta\le\gamma<\tau$.
We also require the square
\begin{equation}\label{diag:stage-compatibility}
\begin{tikzcd}[column sep=large,row sep=large]
\sourceStage{\alpha}
  \arrow[r,"\sourceStageEmbedding{\alpha}{\beta}"]
  \arrow[d,"\stageMap{\alpha}"']
& \sourceStage{\beta}
  \arrow[d,"\stageMap{\beta}"]\\
\targetStage{\alpha}
  \arrow[r,"\targetStageEmbedding{\alpha}{\beta}"']
& \targetStage{\beta}
\end{tikzcd}
\end{equation}
to commute.
Thus
\begin{equation}\label{eq:stage-map-compatibility}
\stageMap{\beta}\circ\sourceStageEmbedding{\alpha}{\beta}
=\targetStageEmbedding{\alpha}{\beta}\circ\stageMap{\alpha}.
\end{equation}
We use the stage embeddings to identify each stage with its image in every later stage.
Under these identifications,
the source spaces and target spaces are increasing chains and the stage maps extend one another.
Assume that every
$\stageMap{\alpha}\colon \sourceStage{\alpha}\to \targetStage{\alpha}$
preserves distances at most a fixed
$r>0$.
Fix
$L>0$.
Whenever
\Cref{lem:protected-extension}
is applied at a successor stage
$\alpha+1$,
set
\[
\sourceStage{\alpha+1}=\oneSum{\sourceStage{\alpha}}{\R},
\]
take
$\sourceStageEmbedding{\alpha}{\alpha+1}$
and
$\targetStageEmbedding{\alpha}{\alpha+1}$
to be the embeddings
$\sourceEmbedding$
and
$\targetEmbedding$
from that lemma,
and let
\[
\stageRetraction{\alpha}{\alpha+1}\colon \targetStage{\alpha+1}\to \targetStage{\alpha}
\]
be the 1-Lipschitz linear retraction obtained there.
It satisfies
\begin{equation}\label{eq:successor-coordinate-recovery}
(\stageRetraction{\alpha}{\alpha+1}\circ\stageMap{\alpha+1})(m,s)
=\stageMap{\alpha}(m)
\end{equation}
whenever
$\abs{s}\le L$.
At a successor stage where no extension is required,
set
\[
\sourceStage{\alpha+1}=\sourceStage{\alpha},
\qquad
\targetStage{\alpha+1}=\targetStage{\alpha},
\qquad
\stageMap{\alpha+1}=\stageMap{\alpha},
\]
and use the identity as
$\sourceStageEmbedding{\alpha}{\alpha+1}$,
$\targetStageEmbedding{\alpha}{\alpha+1}$,
and
$\stageRetraction{\alpha}{\alpha+1}$.

Retractions to earlier stages are defined by composition at successors.
They satisfy
\begin{equation}\label{eq:retraction-coherence}
\stageRetraction{\alpha}{\beta}\circ\targetStageEmbedding{\alpha}{\beta}
=\operatorname{id}_{\targetStage{\alpha}},
\qquad
\stageRetraction{\alpha}{\gamma}\circ\targetStageEmbedding{\beta}{\gamma}
=\stageRetraction{\alpha}{\beta}
\end{equation}
whenever
$\alpha\le\beta\le\gamma$.

\begin{proposition}\label{prop:coherent-limit-stage}
Let
$\lambda$
be a limit stage of the construction above.
Assume that every stage map below
$\lambda$
is injective and preserves every distance at most
$r$.
Let
$\sourceStage{\lambda}$
and
$\targetStage{\lambda}$
be the completions of the corresponding algebraic increasing unions.
Then the coherent union of the stage maps extends uniquely to an injective 1-Lipschitz map
\[
\stageMap{\lambda}\colon\sourceStage{\lambda}\to\targetStage{\lambda}
\]
which extends every earlier stage map and preserves every distance at most
$r$.
For each
$\alpha<\lambda$,
the coherent retractions extend uniquely to a 1-Lipschitz linear retraction
\[
\stageRetraction{\alpha}{\lambda}
\colon\targetStage{\lambda}\to\targetStage{\alpha}
\]
and continue to satisfy
\eqref{eq:retraction-coherence}.

More precisely,
call the transition from stage
$\eta$
to stage
$\eta+1$
active when it applies
\Cref{lem:protected-extension}.
For
$\delta\le\lambda$,
define
\[
\extensionStages{\delta}
=\set{
\eta<\delta
\mid
\text{the transition from }\eta\text{ to }\eta+1\text{ is active}
}.
\]
Thus each
$\eta\in\extensionStages{\delta}$
indexes the new
$\R$
coordinate introduced in that transition,
while an idle transition contributes no coordinate.
We have
$\extensionStages{\gamma}=\extensionStages{\lambda}\cap\gamma$
for
$\gamma<\lambda$.
Under the stage embeddings,
the source spaces have the canonical coordinate representations
\[
\sourceStage{\gamma}
=\oneSum{\sourceStage{0}}{\ellOneOf{\extensionStages{\gamma}}}
\]
for
$\gamma\le\lambda$.
In particular,
\[
\sourceStage{\lambda}
=\oneSum{\sourceStage{0}}{\ellOneOf{\extensionStages{\lambda}}}.
\]
For
$\gamma<\lambda$,
let
\[
\stageProjection{\gamma}
\colon\sourceStage{\lambda}\to\sourceStage{\gamma}
\]
be the projection onto stage
$\gamma$,
defined by
\[
\stageProjection{\gamma}
\bigl(m,(s_\eta)_{\eta\in\extensionStages{\lambda}}\bigr)
=\bigl(m,(s_\eta)_{\eta\in\extensionStages{\gamma}}\bigr).
\]
If
\[
x=(m,(s_\eta)_{\eta\in\extensionStages{\lambda}})
\]
and
$\gamma$
is chosen so that
$\eta<\gamma$
whenever
$\abs{s_\eta}>L$,
then
\begin{equation}\label{eq:tail-coordinate-recovery}
\bigl(\stageRetraction{\gamma}{\lambda}\circ\stageMap{\lambda}\bigr)(x)
=\stageMap{\gamma}(\stageProjection{\gamma}(x)).
\end{equation}
\end{proposition}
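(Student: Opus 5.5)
Extending the stage maps to the completions is a uniform-continuity argument; the coordinate description follows by transfinite induction; and injectivity of $\stageMap{\lambda}$ comes from the tail recovery identity \eqref{eq:tail-coordinate-recovery}.

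\emph{Extension.} On the algebraic union $\bigcup_{\alpha<\lambda}\sourceStage{\alpha}$ the stage maps cohere by \eqref{eq:stage-map-compatibility}. Any two points of the union lie in a common stage, so the union map preserves distances at most $r$ there. By \eqref{eq:global-one-lipschitz} it is therefore 1-Lipschitz. Hence it extends uniquely to a 1-Lipschitz map $\stageMap{\lambda}$ on the completion $\sourceStage{\lambda}$. Distances at most $r$ persist by continuity: given $x_0,x_1$ with $\norm{x_0-x_1}\le r$, approximate them by union points $x_0^k,x_1^k$. If $\norm{x_0-x_1}<r$, eventually $\norm{x_0^k-x_1^k}<r$ and the identity passes to the limit. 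The boundary case $\norm{x_0-x_1}=r$ is handled by approximating with pairs whose distance is at most $r$, for example by scaling $x_1^k$ toward $x_0^k$.

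\emph{Retractions and coordinates.} For fixed $\alpha$, the maps $\stageRetraction{\alpha}{\beta}$ for $\alpha\le\beta<\lambda$ are coherent by \eqref{eq:retraction-coherence}, linear and 1-Lipschitz. They therefore define a 1-Lipschitz linear map on the union, which extends uniquely to $\targetStage{\lambda}$, and the coherence identities pass to the closure by density. For the source representation, induct on $\gamma$. At active successors \ref{item:extension-domain} adds one $\ell_1$ coordinate; at idle successors nothing changes. At limits, the algebraic union is the set of elements of $\oneSum{\sourceStage{0}}{\ellOneOf{\extensionStages{\gamma}}}$ with coordinates supported on an initial segment bounded below $\gamma$. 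This set contains all finitely supported vectors, so its completion is the full $\ell_1$ sum. The identity $\extensionStages{\gamma}=\extensionStages{\lambda}\cap\gamma$ is immediate from the definition.

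\emph{Tail recovery and injectivity.} First prove \eqref{eq:tail-coordinate-recovery} for $x$ in a stage $\sourceStage{\beta}$ with $\gamma\le\beta<\lambda$, by induction on $\beta$. For the successor step, write $x=(x',s)$ in $\sourceStage{\beta+1}$. If the transition is active then $\beta\ge\gamma$, so by hypothesis $\abs{s}\le L$. Now apply \eqref{eq:successor-coordinate-recovery} together with $\stageRetraction{\gamma}{\beta+1}=\stageRetraction{\gamma}{\beta}\circ\stageRetraction{\beta}{\beta+1}$, which follows from the successor composition definition. Limit stages $\beta<\lambda$ require the same continuity argument as in the main claim.

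For general $x\in\sourceStage{\lambda}$, only finitely many coordinates exceed $L$ in absolute value, so a suitable $\gamma$ exists. Approximate $x$ by truncations $\stageProjection{\beta}(x)$ with $\beta\uparrow\lambda$. These satisfy the hypothesis for the same $\gamma$, and $\stageProjection{\gamma}\stageProjection{\beta}=\stageProjection{\gamma}$. Both sides are continuous in $x$ and the truncations converge to $x$, since $\lambda$ is a limit ordinal and the $\ell_1$ tail tends to zero; a countable cofinal subsequence suffices.

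Injectivity then follows. Given $x\ne x'$, choose $\gamma$ large enough that it works for both points and also exceeds the index of some coordinate where they differ; the $\sourceStage{0}$ component may differ instead, in which case no extra condition is needed. Then $\stageProjection{\gamma}(x)\ne\stageProjection{\gamma}(x')$. Applying $\stageRetraction{\gamma}{\lambda}$ and the injectivity of $\stageMap{\gamma}$ separates $\stageMap{\lambda}(x)$ from $\stageMap{\lambda}(x')$.

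The main obstacle is the inductive proof of tail recovery through intermediate limit stages $\beta<\lambda$. Here one must check that the retraction into stage $\gamma$ from a limit completion still sees only the projection. I would handle this by the same density argument used above at $\lambda$, carried inside the induction.
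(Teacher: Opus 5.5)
Your proposal is correct and follows the paper's proof. It extends the maps and retractions coherently to the completions, gets the $\ell_1$ coordinate description from density of finitely supported vectors, proves tail recovery by approximation and continuity, and derives injectivity by applying $\stageRetraction{\gamma}{\lambda}$ together with injectivity of $\stageMap{\gamma}$. The differences are cosmetic: you treat the boundary case $\norm{x_0-x_1}=r$ by rescaling rather than by the paper's norm-one finite truncations, and you write out as an explicit induction over $\gamma\le\beta<\lambda$ (approximating by $\stageProjection{\beta}(x)$) the stage-by-stage calculation that the paper compresses into its truncations $x_F$ and its induction hypothesis at earlier limit stages.
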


\begin{proof}
Fix a limit stage
$\lambda$
and assume by transfinite induction that the recovery identity has been established at every earlier limit stage.
For fixed
$\alpha<\lambda$,
the maps
$\stageRetraction{\alpha}{\beta}$
agree on overlaps by
\eqref{eq:retraction-coherence}.
They therefore define a 1-Lipschitz linear map on the algebraic union by
\[
\stageRetraction{\alpha}{\lambda}
\bigl(\targetStageEmbedding{\beta}{\lambda}(n)\bigr)
=\stageRetraction{\alpha}{\beta}(n)
\]
for
$\alpha\le\beta<\lambda$
and
$n\in\targetStage{\beta}$.
This map extends uniquely to a 1-Lipschitz linear retraction
$\stageRetraction{\alpha}{\lambda}\colon \targetStage{\lambda}\to \targetStage{\alpha}$.
The extensions continue to satisfy
\eqref{eq:retraction-coherence}.

At every transition indexed by
$\eta\in\extensionStages{\lambda}$,
the protected extension replaces the current source
$\sourceStage{\eta}$
by
$\sourceStage{\eta+1}=\oneSum{\sourceStage{\eta}}{\R}$,
and the stage embedding satisfies
\[
\sourceStageEmbedding{\eta}{\eta+1}(m)=(m,0)
\]
for
$m$
in
$\sourceStage{\eta}$.
An idle transition leaves the source unchanged.
Induction over the stages below
$\lambda$
therefore identifies
$\sourceStage{\alpha}$
with
$\oneSum{\sourceStage{0}}{\ellOneOf{\extensionStages{\alpha}}}$
inside
$\oneSum{\sourceStage{0}}{\ellOneOf{\extensionStages{\lambda}}}$
by extending later coordinates by zero.
Consequently,
\[
\oneSum{\sourceStage{0}}{\finitelySupported{\extensionStages{\lambda}}}
\subseteq
\bigcup_{\alpha<\lambda}\sourceStage{\alpha}
\subseteq
\oneSum{\sourceStage{0}}{\ellOneOf{\extensionStages{\lambda}}},
\]
where
$\finitelySupported{\extensionStages{\lambda}}$
denotes the finitely supported scalar families on
$\extensionStages{\lambda}$
with the
$\ellOne$
norm.
The first inclusion holds because every finite subset of
$\extensionStages{\lambda}$
is contained in
$\extensionStages{\gamma}$
for some
$\gamma<\lambda$.
Since the completion of
$\finitelySupported{\extensionStages{\lambda}}$
in this norm is
$\ellOneOf{\extensionStages{\lambda}}$,
the algebraic increasing union is dense in the right hand side.
Therefore,
the completed source has the canonical form
\begin{equation}\label{eq:limit-domain}
\sourceStage{\lambda}
=\oneSum{\sourceStage{0}}{\ellOneOf{\extensionStages{\lambda}}}.
\end{equation}
The coherent union of the maps
$\stageMap{\alpha}$
is 1-Lipschitz by
\eqref{eq:global-one-lipschitz}.
It therefore extends uniquely to a 1-Lipschitz map
\[
\stageMap{\lambda}\colon \sourceStage{\lambda}\to \targetStage{\lambda}.
\]
For every
$\alpha<\lambda$,
the extension satisfies
\[
\stageMap{\lambda}\circ\sourceStageEmbedding{\alpha}{\lambda}
=\targetStageEmbedding{\alpha}{\lambda}\circ\stageMap{\alpha}.
\]

Common finite coordinate truncations show that
$\stageMap{\lambda}$
preserves every distance at most
$r$.
For a finite set
$F\subset\extensionStages{\lambda}$,
let
\[
\coordinateTruncation{F}\colon\sourceStage{\lambda}\to\sourceStage{\lambda}
\]
be the coordinate projection which retains the
$\sourceStage{0}$
component and the coordinates indexed by
$F$.
Its operator norm satisfies
\[
\norm{\coordinateTruncation{F}}_{\mathrm{op}}\le 1.
\]
If
$x_0,x_1\in\sourceStage{\lambda}$
and
$\norm{x_0-x_1}_{\sourceStage{\lambda}}\le r$,
then
$\coordinateTruncation{F}(x_0)$
and
$\coordinateTruncation{F}(x_1)$
belong to a common earlier stage and remain at distance at most
$r$.
Their distance is therefore preserved by that stage map and hence by
$\stageMap{\lambda}$.
As the finite sets
$F$
increase,
$\coordinateTruncation{F}(x_i)$
converges to
$x_i$
for
$i\in\set{0,1}$.
Continuity now yields equality at the limit.

Fix
$x\in \sourceStage{\lambda}$
and write
\[
x=(m,(s_\alpha)_{\alpha\in\extensionStages{\lambda}})
\in\oneSum{\sourceStage{0}}{\ellOneOf{\extensionStages{\lambda}}}.
\]
Choose
$\gamma<\lambda$
so that
$\alpha<\gamma$
whenever
$\alpha\in\extensionStages{\lambda}$
and
$\abs{s_\alpha}>L$.
There are only finitely many such coordinates because the tail in
\eqref{eq:limit-domain}
belongs to
$\ellOneOf{\extensionStages{\lambda}}$.

For a finite set
$F\subset\extensionStages{\lambda}\cap[\gamma,\lambda)$,
let
\[
\stageTailTruncation{\gamma}{F}\colon
\sourceStage{\lambda}\to\sourceStage{\lambda}
\]
be the coordinate projection which retains the
$\sourceStage{0}$
component and the coordinates indexed by
$\extensionStages{\gamma}\cup F$.
Put
$x_F=\stageTailTruncation{\gamma}{F}(x)$.
Choose
$\delta$
with
$\gamma\le\delta<\lambda$
so that
$x_F\in\sourceStage{\delta}$.
By compatibility of the stage maps and
\eqref{eq:retraction-coherence},
we have
\[
\stageMap{\lambda}(x_F)
=\targetStageEmbedding{\delta}{\lambda}(\stageMap{\delta}(x_F))
\quad\text{and}\quad
\stageRetraction{\gamma}{\lambda}
\circ\targetStageEmbedding{\delta}{\lambda}
=\stageRetraction{\gamma}{\delta}.
\]
Every coordinate of
$x_F$
added after stage
$\gamma$
has absolute value at most
$L$.
The successor identity
\eqref{eq:successor-coordinate-recovery},
the identity maps at idle stages,
and the induction hypothesis at earlier limit stages therefore yield the complete map calculation
\begin{align}
\bigl(\stageRetraction{\gamma}{\lambda}\circ\stageMap{\lambda}\bigr)(x_F)
&=\bigl(\stageRetraction{\gamma}{\lambda}
\circ\targetStageEmbedding{\delta}{\lambda}
\circ\stageMap{\delta}\bigr)(x_F)\notag\\
&=\bigl(\stageRetraction{\gamma}{\delta}
\circ\stageMap{\delta}\bigr)(x_F)\notag\\
&=\stageMap{\gamma}(\stageProjection{\gamma}(x)).
\label{eq:finite-stage-recovery}
\end{align}
As
$F$
increases through the finite subsets of
$\extensionStages{\lambda}\cap[\gamma,\lambda)$,
the points
$x_F$
converge to
$x$.
Continuity therefore yields
\eqref{eq:tail-coordinate-recovery}.

Assume that
\[
\stageMap{\lambda}(x_0)=\stageMap{\lambda}(x_1).
\]
Write
\[
x_i=(m_i,(s_{i,\eta})_{\eta\in\extensionStages{\lambda}})
\]
for
$i\in\set{0,1}$.
Choose
$\beta<\lambda$
so that
$\eta<\beta$
whenever
$\eta\in\extensionStages{\lambda}$
and
\[
\max\set{\abs{s_{0,\eta}},\abs{s_{1,\eta}}}>L.
\]
For every
$\gamma$
with
$\beta\le\gamma<\lambda$,
apply
$\stageRetraction{\gamma}{\lambda}$
to the equality and use
\eqref{eq:tail-coordinate-recovery}.
We obtain
\[
\stageMap{\gamma}(\stageProjection{\gamma}(x_0))
=\stageMap{\gamma}(\stageProjection{\gamma}(x_1)).
\]
Injectivity at stage
$\gamma$
implies
\[
\stageProjection{\gamma}(x_0)=\stageProjection{\gamma}(x_1).
\]
The projection onto every stage retains the
$\sourceStage{0}$
component.
For each
$\eta\in\extensionStages{\lambda}$,
choose
$\gamma$
with
$\max\set{\beta,\eta+1}\le\gamma<\lambda$.
The equality of the two stage projections then implies
$s_{0,\eta}=s_{1,\eta}$.
Hence
$x_0=x_1$.
\end{proof}

Together with
\Cref{lem:protected-extension},
\Cref{prop:coherent-limit-stage}
shows by transfinite induction that all successor maps and all completed limit maps remain injective and preserve every distance at most
$r$.

\section{The counterexample}\label{sec:counterexample}
We now assemble the counterexample.
The construction has three parts.
We first define the initial map,
then control the size of every stage so that target points can be scheduled,
and finally run the transfinite recursion.

\subsection{The bent seed}

Define the polygonal map
$\bentMap\colon\R\to\bentTarget$
by
\begin{equation}\label{eq:bent-seed}
\bentMap(t)=
\begin{cases}
(t,0), & t\le 0,\\
(0,t), & 0\le t\le 1,\\
(1-t,1), & 1\le t.
\end{cases}
\end{equation}
The image consists of two parallel rays joined by a vertical segment,
so
$\bentMap$
is injective.
If
$\abs{s-t}\le 1/2$,
the parameter interval between
$s$
and
$t$
meets at most one corner.
At either corner the two directions occupy different
$\ellOne$
coordinates.
It follows that
\begin{equation}\label{eq:bent-seed-short}
\norm{\bentMap(s)-\bentMap(t)}_{\bentTarget}=\abs{s-t}
\end{equation}
whenever
$\abs{s-t}\le 1/2$.
On the other hand,
\begin{equation}\label{eq:bent-seed-contraction}
\norm{\bentMap(-1)-\bentMap(2)}_{\bentTarget}=1<3=\abs{-1-2}.
\end{equation}

\subsection{Cardinal bounds and scheduling}

The bent map supplies the required failure of global distance preservation,
but its image is not the whole target.
To add every missing target point later,
we first choose a recursion length larger than the cardinality of every possible stage.
Put
\[
\continuumCardinal=2^{\aleph_0},
\qquad
\stageCardinal=2^{\continuumCardinal},
\qquad
\recursionCardinal=\stageCardinal^+.
\]
The cardinal
$\recursionCardinal$
is regular and uncountable.
We shall construct a chain through all ordinals below
$\recursionCardinal$.

Every stage of the construction will have cardinality at most
$\stageCardinal$.
The initial spaces have this property.
At a successor stage,
the metric adjunction constructed in
\Cref{lem:metric-adjunction}
has cardinality at most
$\stageCardinal$.
Its Lipschitz-free space has density at most
$\stageCardinal$.
The relative Banach envelope
$\relativeEnvelope$
from
\Cref{lem:relative-envelope}
therefore has density at most
$\stageCardinal$.
The metric quotient
$\quotientSpace$
has cardinality at most
$\stageCardinal$,
and its Kuratowski image has closed linear span
$\kuratowskiSpan$
of density at most
$\stageCardinal$.
Thus the direct sum used in
\Cref{lem:protected-extension}
also has density at most
$\stageCardinal$.
A Banach space of density at most
$\stageCardinal$
has cardinality at most
\[
\stageCardinal^{\aleph_0}
=(2^{\continuumCardinal})^{\aleph_0}
=2^{\continuumCardinal}
=\stageCardinal.
\]

If
$\lambda<\recursionCardinal$
is a limit ordinal,
then
$\abs{\lambda}\le\stageCardinal$.
The increasing union at stage
$\lambda$
therefore has cardinality at most
$\stageCardinal$,
and its completion has the same bound.
This proves the stage bound by transfinite induction.

To ensure that every target point is eventually included in the image,
we use a standard bookkeeping argument.
Fix an injection
\begin{equation}\label{eq:schedule}
\scheduleMap\colon\recursionCardinal\times\recursionCardinal\to\recursionCardinal
\end{equation}
such that
$\alpha<\scheduleMap(\alpha,\xi)$
for all
$\alpha,\xi<\recursionCardinal$.
Such an injection can be constructed recursively after well ordering
$\recursionCardinal\times\recursionCardinal$
in order type
$\recursionCardinal$.
At each step fewer than
$\recursionCardinal$
ordinals have been used,
while the tail above the selected first coordinate still has cardinality
$\recursionCardinal$.

\subsection{Proof of the main theorem}

At a successor transition which processes a missing target point,
we use
\Cref{lem:protected-extension}.
An idle successor transition uses the identity maps,
and a limit stage uses
\Cref{prop:coherent-limit-stage}.
Equation
\eqref{eq:stage-map-compatibility}
then makes the stage maps into one well-defined final map.
\begin{proof}[Proof of \Cref{thm:counterexample}]
We construct the stage data by transfinite recursion on
$\beta<\recursionCardinal$.
When stage
$\beta$
is formed,
the data consist of Banach spaces
$\sourceStage{\beta}$
and
$\targetStage{\beta}$,
a map
$\stageMap{\beta}\colon\sourceStage{\beta}\to\targetStage{\beta}$,
and a surjection
\[
\stageEnumeration{\beta}\colon\recursionCardinal\to\targetStage{\beta}.
\]
We retain the constants
\[
r=1/2
\quad\text{and}\quad
L=1
\]
at every stage.
The stage invariant is that
$\stageMap{\beta}$
is injective and preserves every distance at most
$r$.

At stage zero,
set
\[
\sourceStage{0}=\R,
\qquad
\targetStage{0}=\bentTarget,
\qquad
\stageMap{0}=\bentMap,
\]
where
$\bentMap$
is defined by
\eqref{eq:bent-seed}.
The cardinal bound permits us to choose the surjection
\[
\stageEnumeration{0}\colon\recursionCardinal\to \targetStage{0}.
\]

Assume next that stage
$\beta$
has been formed.
Once stage
$\delta$
has been formed,
the pair
$(\delta,\xi)\in\recursionCardinal\times\recursionCardinal$
represents the requirement that the point
$\stageEnumeration{\delta}(\xi)$
eventually belong to the image of a stage map.
The schedule map assigns this
$\recursionCardinal\times\recursionCardinal$-indexed
family of requirements to distinct transition indices below
$\recursionCardinal$.
At the transition from
$\beta$
to
$\beta+1$,
there is at most one pair
$(\delta,\xi)$
with
\[
\beta=\scheduleMap(\delta,\xi)
\]
because
$\scheduleMap$
is injective.
If there is no such pair,
leave the stage unchanged.
If there is such a pair,
then
$\delta<\beta$
by the defining property of
$\scheduleMap$.
The stage embedding therefore defines the scheduled point
\[
y_{\delta,\xi}^{\beta}
:=\targetStageEmbedding{\delta}{\beta}
\bigl(\stageEnumeration{\delta}(\xi)\bigr)
\in\targetStage{\beta}.
\]
When
$y_{\delta,\xi}^{\beta}\notin \stageMap{\beta}(\sourceStage{\beta})$,
the stage invariant permits us to apply
\Cref{lem:protected-extension}
to
$\stageMap{\beta}$
and
$y_{\delta,\xi}^{\beta}$.
When
$y_{\delta,\xi}^{\beta}$
already belongs to the image,
leave the stage unchanged.
After
$\targetStage{\beta+1}$
has been formed,
the cardinal bound permits us to choose
\[
\stageEnumeration{\beta+1}
\colon\recursionCardinal\to\targetStage{\beta+1}.
\]

At a limit ordinal
$\lambda<\recursionCardinal$,
form
$\sourceStage{\lambda}$,
$\targetStage{\lambda}$,
and
$\stageMap{\lambda}$
as in
\Cref{prop:coherent-limit-stage}.
The cardinal bound then permits us to choose
\[
\stageEnumeration{\lambda}
\colon\recursionCardinal\to\targetStage{\lambda}.
\]
This completes the transfinite recursion.

The same recursive clauses prove by transfinite induction that
every
$\stageMap{\beta}$
is injective and preserves every distance at most
$1/2$.
At an active successor transition this follows from
\Cref{lem:protected-extension}.
At an idle transition it is immediate,
and at a limit stage it follows from
\Cref{prop:coherent-limit-stage}.
The stage maps satisfy
\eqref{eq:stage-map-compatibility},
and every stage embedding is a linear isometry.
The cardinal calculation above verifies at each recursive clause that the required enumeration exists.

Define the final spaces by
\[
X=\bigcup_{\alpha<\recursionCardinal}\sourceStage{\alpha},
\qquad
Y=\bigcup_{\alpha<\recursionCardinal}\targetStage{\alpha}.
\]
Define
\[
U\colon X\to Y,
\qquad
U(x)=\stageMap{\alpha}(x)
\quad\text{when }x\in\sourceStage{\alpha}.
\]
Equation
\eqref{eq:stage-map-compatibility}
shows that this definition is independent of the chosen stage
$\alpha$.
The spaces
$X$
and
$Y$
are complete.
Indeed,
the members of a Cauchy sequence
$\set{x_n}_{n\in\Z_{\ge 0}}$
belong to countably many stages.
Their stage indices have supremum below
$\recursionCardinal$
by regularity.
The entire sequence is therefore contained in one earlier Banach stage and converges there.
The same argument applies to the target.

The map
$U$
is injective and preserves every distance at most
$1/2$,
since every pair of source points belongs to a common stage.
It is also surjective.
To see this,
fix
$y\in Y$.
There is
$\delta<\recursionCardinal$
with
$y\in \targetStage{\delta}$,
and there is
$\xi<\recursionCardinal$
with
\[
\stageEnumeration{\delta}(\xi)=y.
\]
Put
$\beta=\scheduleMap(\delta,\xi)$.
Under the stage identifications,
the scheduled point
\[
y_{\delta,\xi}^{\beta}
=\targetStageEmbedding{\delta}{\beta}(y)
\]
is the same element of
$Y$
as
$y$.
At the transition from
$\beta$
to
$\beta+1$,
this point is put into the image if it is still missing.
It remains in the image at every later stage.

If
$p,q\in \closedBall{X}{x}{1/4}$,
then
$\norm{p-q}_{X}\le 1/2$.
Hence
\[
\norm{U(p)-U(q)}_{Y}=\norm{p-q}_{X}
\]
by the fixed short distance property.
Finally,
the points
$-1$
and
$2$
from the initial source remain at distance three,
while their images remain at distance one by
\eqref{eq:bent-seed-contraction}.
Thus
$U$
is not a global isometry.
\end{proof}

The counterexample shows that the local closed ball hypothesis alone is insufficient.
The positive results recalled in the introduction leave the following problem.

\begin{problem}\label{prob:sufficient-conditions}
Find further structural conditions on real Banach spaces
$X$
and
$Y$,
or regularity conditions on a bijection
$U\colon X\to Y$,
which ensure that the local closed ball hypothesis of Scottish Book Problem~155 forces
$U$
to be a global isometry.
\end{problem}

\printbibliography

\end{document}